\newtheorem{lemma}{Lemma}[section]
\newtheorem{theorem}{Theorem}[section]
\newtheorem{proposition}{Proposition}[section]
\theoremstyle{definition}
\newtheorem{definition}{Definition}
\DeclareMathOperator{\res}{res}
\DeclareMathOperator{\lc}{lc}
\newcommand{\tri}[3]{\ensuremath{#1^{#2} - #1^{#3} + 1}}
\title{Dyadically resolving trinomials for fast modular arithmetic}
\author{Robert Dougherty-Bliss \and Mits Kobayashi \and Natalya Ter-Saakov \and Eugene Zima}
\date{\today}
\begin{document}

\maketitle

\begin{abstract}
    Residue number systems based on pairwise relatively prime moduli are a
    powerful tool for accelerating integer computations via the Chinese
    Remainder Theorem. We study a structured family of moduli of the form $2^n -
    2^k + 1$, originally proposed for their efficient arithmetic and bit-level
    properties. These trinomial moduli support fast modular operations and
    exhibit scalable modular inverses.

    We investigate the problem of constructing large sets of pairwise
    relatively prime trinomial moduli of fixed bit length. By analyzing the
    corresponding trinomials $x^n - x^k + 1$, we establish a sufficient
    condition for coprimality based on polynomial resultants. This leads to a
    graph-theoretic model where maximal sets correspond to cliques in a
    compatibility graph, and we use maximum clique-finding algorithms to
    construct large examples in practice. Based on the theory of graph
    colorings, polynomial resultants, and properties of cyclotomic polynomials,
    we also prove upper bounds on the size of such sets as a function of $n$.
\end{abstract}

\section{Introduction}

A popular technique to speed up computations with integer arithmetic is to
reduce the input modulo several relatively prime numbers, compute with the
residues, then reconstruct the result with the Chinese remainder theorem
\cite{knuth,doliskani_simultaneous_2018,zima_cunningham_2007}. In \cite{zima},
it was proposed to use ``trinomial'' moduli of the form
\begin{equation}
    \label{trinom2}
    2^n - 2^k + 1, \quad 0 < k < n.
\end{equation}
These numbers have nice binary representations which make reduction into a
modular system fast. Sometimes, they exhibit ``scalable inverses''. By that, we
mean that replacing $2$ with $2^c$ does not change the sparsity or bit-pattern
of the modular inverses. For example, the moduli $2^{20} - 2^{12} + 1$ and
$2^{20} - 2^4 + 1$ satisfy the following property: for any integer $c \geq 1$,
\begin{alignat*}{2}
    ((2^c)^{20}-(2^c)^{12}+1)^{-1} &\equiv (2^c)^{8} + 1 &&\pmod{(2^c)^{20}-(2^c)^{4}+1}\\
    ((2^c)^{20}-(2^c)^{4}+1)^{-1} &\equiv (2^c)^{20} - (2^c)^{12} - (2^c)^{8} + 1 &&\pmod{(2^c)^{20}-(2^c)^{12}+1}.
\end{alignat*}
This means that the reconstruction step of the Chinese Remainder Theorem can be
carried out quickly, even with arbitrarily large moduli. Using ad-hoc methods,
the authors of \cite{zima} discovered a set of five pairwise relatively prime
moduli of this form and used them in upper-layer on top of
\cite{doliskani_simultaneous_2018} to show improvement in a standard integer
matrix multiplication benchmark.

Their work left open the following questions:
\begin{enumerate}
    \item When are two trinomial moduli relatively prime with scalable inverses?
    \item Are there arbitrarily large sets of trinomial moduli with the same
    bit length which are pairwise relatively prime with scalable inverses?
    \item For a fixed bit length, how can we efficiently find these sets?
\end{enumerate}
Our aim is to answer some of these questions by examining the pure polynomial
trinomials
\begin{equation}
    \label{trinomx}
    x^n - x^k + 1, \quad 0 < k < n.
\end{equation}

It might seem reasonable to establish the relative primality of trinomial pairs
of the form \eqref{trinomx}, then use this to deduce the relative primality of
moduli of the form \eqref{trinom2}. Unfortunately, this does not work. Almost
all pairs of trinomials of the form \eqref{trinomx} are relatively prime over
the rationals, yet only a small proportion of moduli of the form \eqref{trinom2}
are relatively prime integers. In other words, the substitution $x = 2$ in
\eqref{trinomx} does not preserve relative primality.

Our main results are as follows:
\begin{enumerate}
    \item A simple condition based on the resultant of two trinomials which
    ensures the relative primality of moduli of the form and the scalability of
    their modular inverses.
    \item A proof that arbitrarily large sets of relatively prime moduli with scalable inverses exist.
    \item An upper bound on the number of such moduli for a fixed bit length.
\end{enumerate}

\section{Background and definitions}

\begin{definition}
    The \emph{resultant} of two polynomials $f(x)$ and $g(x)$ over $\mathbb{Q}$
    with roots $\alpha_i$ and $\beta_j$, respectively, is
    \begin{equation*}
        \res(f, g) = \lc(f)^{\deg g} \lc(g)^{\deg f} \prod_{\substack{1 \leq i \leq \deg f \\ 1 \leq j \leq \deg g}} (\alpha_i - \beta_j),
    \end{equation*}
    where $\lc(f)$ is the leading coefficient of $f$. The resultant satisfies
    the following properties:
    \begin{align*}
        \res(f, g) &= \lc(f)^{\deg g} \prod_{i = 1}^{\deg f} g(\alpha_i) \\
        \res(f, g) &= (-1)^{\deg f \deg g} \res(g, f) \\
        \res(fg, p) &= \res(f, p) \res(g, p), \quad p \in \mathbb{Q}[x].
    \end{align*}
    If $g = fq + r$, where $\deg r < \deg f$, then
    \begin{equation*}
        \res(f, g) = \lc(f)^{\deg g - \deg r} \res(f, r).
    \end{equation*}
\end{definition}

\begin{definition}
    A pair of polynomials with integer coefficients \emph{dyadically
    resolve} if their resultant is a signed power of 2, meaning it is of the form $\pm 2^k$ for
    some nonnegative integer $k$.
\end{definition}

This definition is motivated by \cite{zima}, where it was desired that numbers
of the form $2^{cn} - 2^{ck} + 1$ would be relatively prime with ``stable''
inverses for sufficiently large integers $c$. By that, we mean that there exists
a polynomial $f(x)$ such that
\begin{equation*}
    (2^{cn} - 2^{ck} + 1)^{-1} \equiv f(2^c) \pmod{2^{cn} - 2^{cj} + 1}
\end{equation*}
for all sufficiently large values of $c$. It turns out that this is equivalent
to stating that the polynomials $x^n - x^k + 1$ and $x^n - x^j + 1$ dyadically
resolve.

The basic idea is that the inverse polynomial $f$ should be a B\'ezout cofactor
of $x^n - x^k + 1$ and $x^n - x^j + 1$. Assuming that the trinomials are
relatively prime, we can write \begin{equation*} (x^n - x^k + 1) f(x) + (x^n -
x^j + 1) g(x) = 1 \end{equation*} for some polynomials $f$ and $g$ over
$\mathbb{Q}$. The resultant of two polynomials is the determinant of their
Sylvester matrix, so if the trinomials dyadically resolve then $f$ and $g$ can
be constructed using only dyadic coefficients. Then, plugging in $x = 2^c$ for
large enough $c$ produces a statement about modular inverses. Getting the
details exactly right and proving the converse---``scalable inverses'' imply
dyadic resolvability---is more complicated and handled in
Section~\ref{sec:equivalence}.

\begin{definition}
    For a fixed integer $n$, let $T(n)$ be the graph on the vertices $\{1, 2,
    \dots, n - 1\}$ which contains the edge $\{i, j\}$ if and only if
    $\tri{x}{n}{i}$ and $\tri{x}{n}{j}$ dyadically resolve. Let $\omega(T(n))$
    be the largest size of a clique in $T(n)$.
\end{definition}

Many important questions about trinomial moduli can be reformulated as questions
about the graphs $T(n)$. A set of pairwise dyadically resolving trinomials
corresponds to a clique (complete subgraph) in $T(n)$; the percent of pairs of
trinomials which dyadically resolve is the edge density of $T(n)$; and so on.
See Figure~\ref{T-40-100} for striking drawings of $T(n)$.

\begin{figure}[t]
    \centering
    \includegraphics[width=0.5\linewidth]{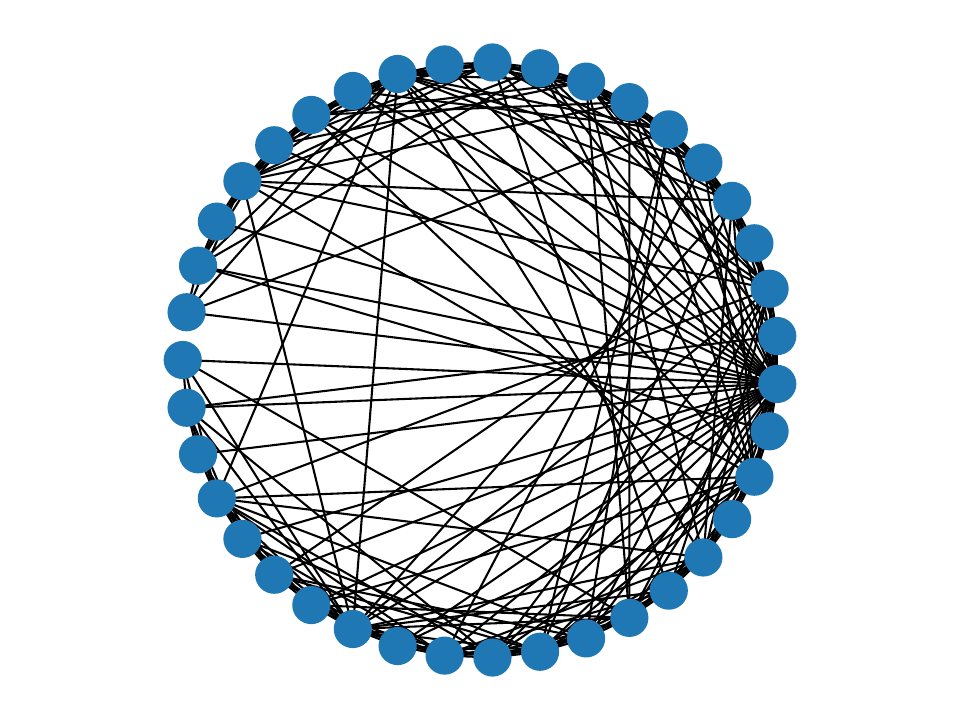}%
    \includegraphics[width=0.5\linewidth]{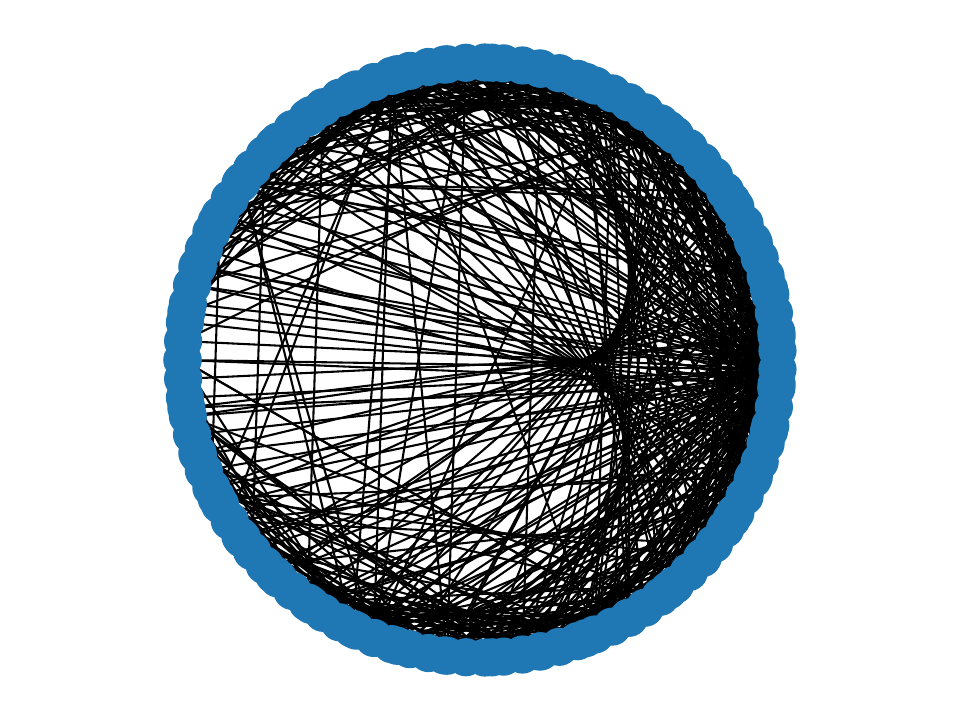}
    \caption{$T(40)$ and $T(100)$}
    \label{T-40-100}
\end{figure}

\begin{definition}
    $\nu_2(n)$ is the \emph{2-adic valuation of $n$}, meaning $\nu_2(n) = v$ is the
    largest $v$ such that $2^v$ divides $n$.
\end{definition}

\begin{definition}
    A \emph{root of unity} is a complex number $z$ such that $z^n = 1$ for some
    positive integer $n$. The smallest such $n$ is called the \emph{order} of
    $z$, and $z$ is also called a \emph{primitive} root of unity of its order.
    The primitive $n$th roots of unity all have the same minimal polynomial over
    $\mathbb{Q}$, the $n$th cyclotomic polynomial $\Phi_n(x)$. The cyclotomic
    polynomials corresponding to powers of $2$ have the simple formula
    \begin{equation*}
        \Phi_{2^t}(x) = x^{2^{t - 1}} + 1
    \end{equation*}
    for $t \geq 1$.
\end{definition}

\section{Computational results}

The main computational problem is to determine the largest possible set of
pairwise dyadically resolving trinomials of degree $n$, which is equivalent to
finding the the maximum cliques of $T(n)$. The graph theoretic formulation
gives us a computational foothold, because finding the maximum cliques in a
graph is known to take less time than the brute force approach. The
Bronn--Kerbosch algorithm runs in time $O(3^{n / 3})$ \cite{bron-kerbosch} and
Robson's algorithm runs in time $O(2^{n/4})$ \cite{robson}.

\paragraph{Clique numbers.} We have constructed $T(n)$ for $n \leq 3000$ and
determined all of their maximum cliques. These results are available at
\url{https://github.com/rdbliss/trinomials}. The largest known clique in
\cite{zima} had size five, but we now know several cliques of size ten. These
cliques were used in two-layer modular arithmetic package to further improve
performance reported in \cite{zima}. We do not know the first $n$ for which a
clique of size 11 occurs in $T(n)$.

Figure~\ref{cliques}, a plot of all known clique numbers, suggests that
$\omega(T(n))$ grows extremely slowly. In the following sections we will show
that
\begin{equation*}
    \lim_{n \to \infty} \omega(T(n)) = \infty
    \quad \text{and} \quad
    \omega(T(n)) = O(\log n),
\end{equation*}
but the true rate of growth is unknown. For example, our arguments imply that
$\omega(T(n)) \geq 6$ for $n > 48\ 439\ 664$, but Figure~\ref{cliques} suggests
that $\omega(T(n)) \geq 6$ for $n > 500$.

\begin{figure}
    \centering
    \includegraphics[width=\textwidth]{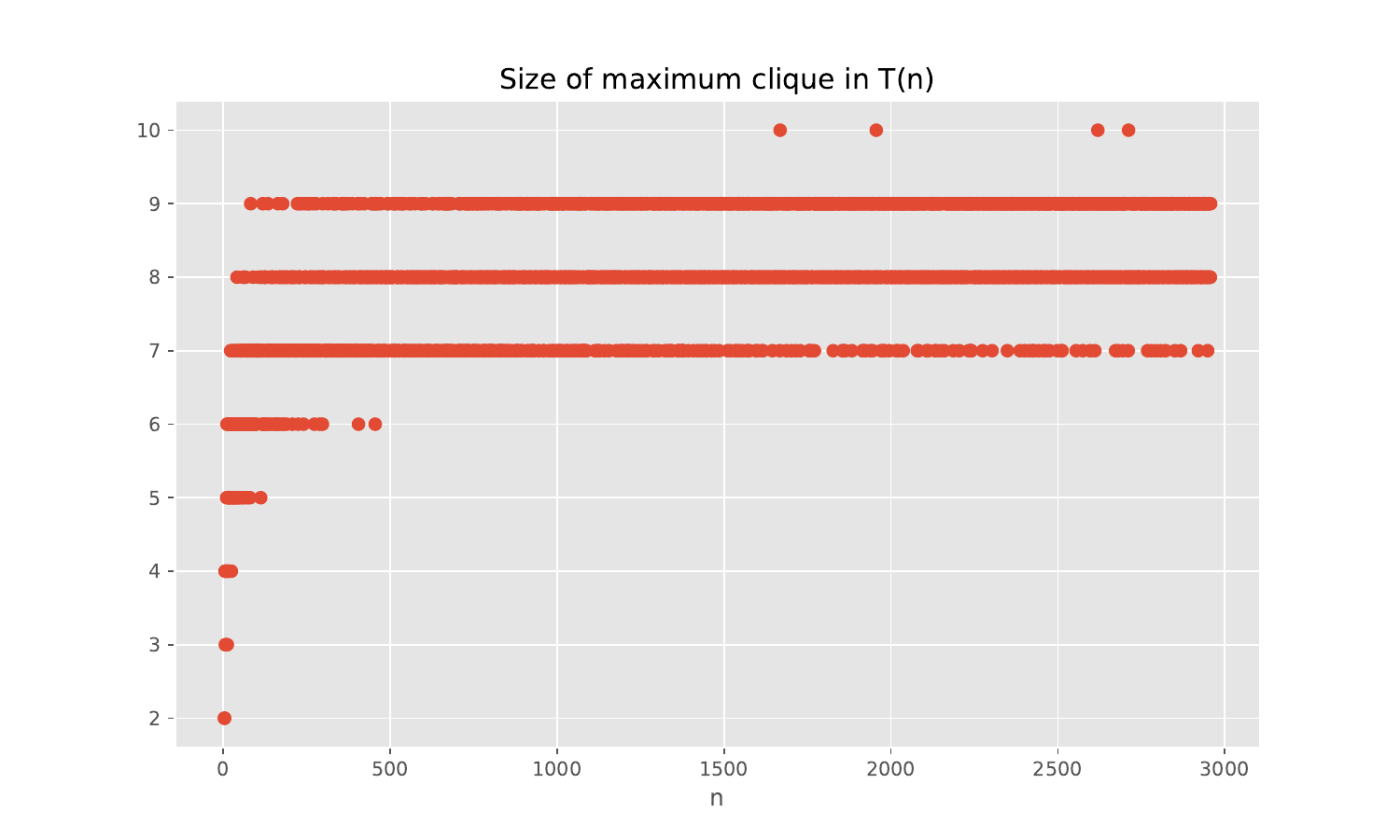}
    \caption{Clique number of $T(n)$ for $n \leq 3000$.}
    \label{cliques}
\end{figure}

\paragraph{Edge density.} We do not know of any simple, widely applicable
condition to predict the edges of $T(n)$. Some sufficient conditions are given
in the following section, but they explain a small fraction of the edges in
$T(n)$. The adjacency matrix of $T(200)$ in Figure~\ref{adj-T-200} has many
interesting features. Empirically, it seems that very few pairs of trinomials
dyadically resolve. We conjecture that the number of edges in $T(n)$ is
$o(n^2)$.

\begin{figure}[t]
    \centering
    \includegraphics[width=0.7\textwidth]{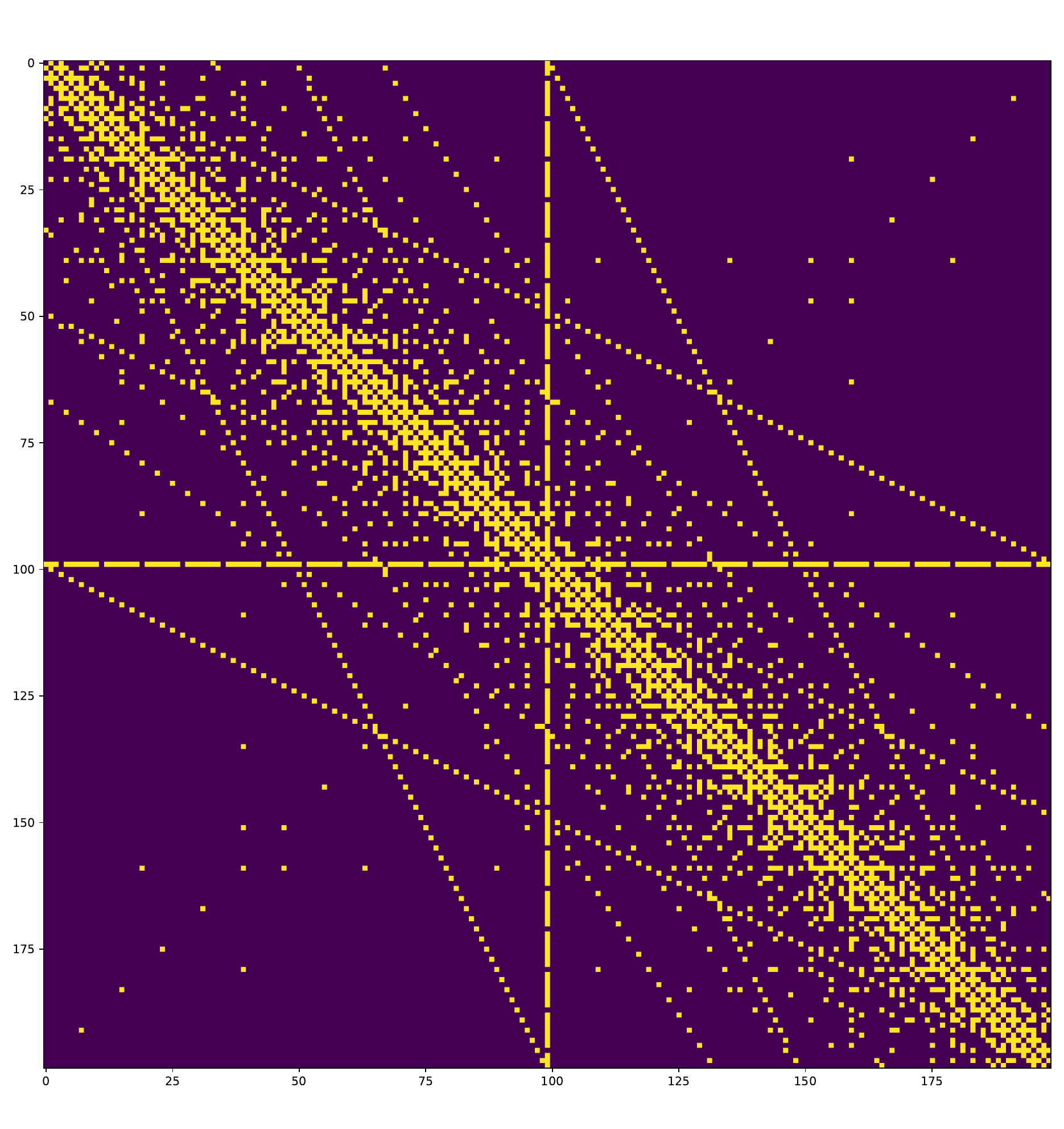}
    \caption{Adjacency matrix of $T(200)$. Edges are marked with yellow, and
    non-edges with purple.}
    \label{adj-T-200}
\end{figure}

\begin{figure}
    \centering
    \includegraphics[width=0.7\textwidth]{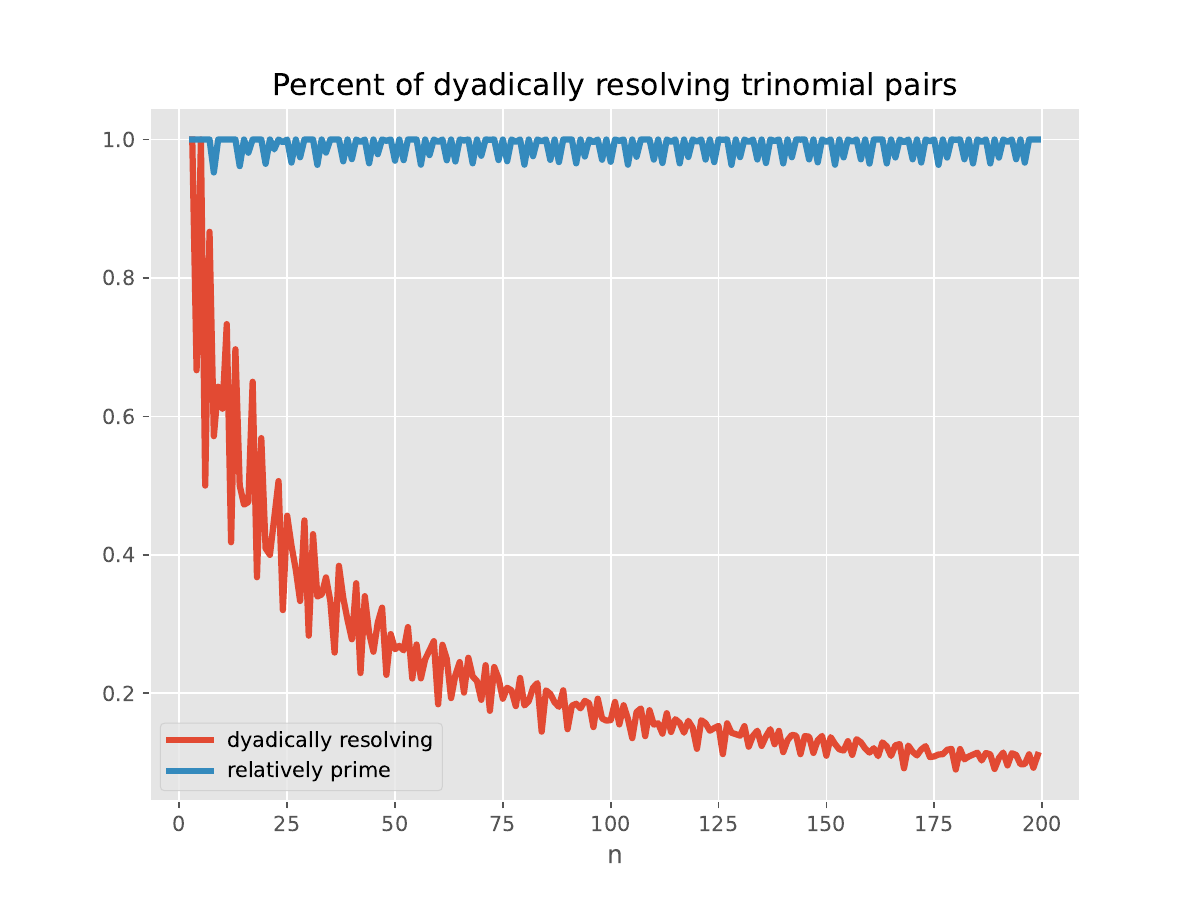}
    \caption{Percent of trinomial pairs which are relatively prime (blue) and
    dyadically resolving (red). Almost all pairs are relatively prime, but a
much smaller proportion dyadically resolve.}
    \label{percents}
\end{figure}

Finally, we mention that the main computational bottleneck seems to be
constructing $T(n)$, not finding cliques. It is possible to construct $T(n)$ for
$n$ as large as 3000 or a bit higher, but significantly larger $n$ would likely
require some clever computation.

\section{Properties of the trinomial graph}

The goal of this section is to prove that there are cliques of arbitrary sizes
in $T(n)$ as $n \to \infty$. This implies that the optimization problem of
finding the \emph{best} cliques is well-founded. We will also prove a number of
interesting properties unrelated to our main problem.

\begin{lemma}
    \label{simple-seq}
    \tri{x}{n}{k} and \tri{x}{n}{j} dyadically resolve if $k - j$ divides $k$.
\end{lemma}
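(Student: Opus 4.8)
The plan is to compute the resultant $\res(\tri{x}{n}{k}, \tri{x}{n}{j})$ explicitly and show that it equals $\pm 1 = \pm 2^0$, which is a signed power of $2$. Assume without loss of generality that $k > j$ and write $d = k - j$, so that the hypothesis reads $d \mid k$; since $j = k - d$, this is equivalent to $d \mid j$ as well. Set $f(x) = \tri{x}{n}{k}$ and $g(x) = \tri{x}{n}{j}$, both monic of degree $n$.

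First I would evaluate $g$ at the roots of $f$. At any root $\alpha$ of $f$ we have $g(\alpha) = g(\alpha) - f(\alpha) = \alpha^k - \alpha^j = \alpha^j(\alpha^d - 1)$. Using the product formula for the resultant together with the fact that $f$ is monic,
\begin{equation*}
    \res(f, g) = \prod_{f(\alpha) = 0} g(\alpha) = \Bigl(\prod_{f(\alpha)=0}\alpha\Bigr)^{j}\,\res(f, x^d - 1) = (-1)^{nj}\,\res(f, x^d - 1),
\end{equation*}
since the product of the roots of $f$ is $(-1)^n$ (its constant term is $1$). This routes the entire computation through $\res(f, x^d - 1)$.

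It then remains to show that $\res(f, x^d - 1)$ is a signed power of $2$. Applying the symmetry property and then the product formula over the roots of $x^d - 1$ gives
\begin{equation*}
    \res(f, x^d - 1) = (-1)^{nd}\res(x^d - 1, f) = (-1)^{nd}\prod_{\omega^d = 1} f(\omega),
\end{equation*}
where $\omega$ runs over all $d$-th roots of unity. This is where the divisibility hypothesis does the work: for any such $\omega$ we have $\omega^d = 1$, and because $d \mid k$ and $d \mid j$ this forces $\omega^k = \omega^j = 1$, so that $f(\omega) = \omega^n - \omega^k + 1 = \omega^n$. Hence
\begin{equation*}
    \prod_{\omega^d = 1} f(\omega) = \Bigl(\prod_{\omega^d = 1}\omega\Bigr)^{n} = \bigl((-1)^{d-1}\bigr)^{n} = \pm 1,
\end{equation*}
using that the product of all $d$-th roots of unity equals $(-1)^{d-1}$. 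Combining the pieces yields $\res(f, g) = \pm 1$, so $f$ and $g$ dyadically resolve.

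The computation is short once the right reduction is found, so the main obstacle is conceptual rather than technical: one must notice that subtracting $f$ from $g$ turns $g$ into the monomial multiple $x^j(x^d - 1)$ on the zero set of $f$, and then recognize that the hypothesis $d \mid k$ is precisely what collapses $f$ to the pure power $x^n$ on the $d$-th roots of unity. A cleaner but essentially equivalent route would factor $x^d - 1 = \prod_{e \mid d}\Phi_e(x)$ and evaluate each $\res(\Phi_e, f)$ via the product of primitive $e$-th roots of unity; I expect the direct evaluation over all $d$-th roots of unity above to be the most economical.
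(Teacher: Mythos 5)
Your proposal is correct and follows essentially the same route as the paper: reduce the resultant to $\res(f, x^{k-j}-1)$, use the divisibility hypothesis to collapse $f(\omega)$ to $\omega^n$ on the $(k-j)$-th roots of unity, and conclude the product is $\pm 1$. The only cosmetic difference is that you evaluate $g$ directly at the roots of $f$ where the paper uses the Euclidean-step and multiplicativity identities for resultants, and you absorb the $x^j$ factor via the product of the roots of $f$ where the paper computes $\res(f, x^j) = \pm 1$ from $f(0)=1$; these are the same facts in different clothing.
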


\begin{proof}
    Without loss of generality, suppose that $k > j$. Then, the resultant of
    two such trinomials can be computed as follows:
    \begin{align*}
        \res(\tri{x}{n}{k}, \tri{x}{n}{j})
        &=
        \pm\res(\tri{x}{n}{k}, x^k - x^j) \\
        &=
        \pm\res(\tri{x}{n}{k}, x^j) \res(\tri{x}{n}{k}, x^{k - j} - 1).
    \end{align*}
    The first factor is $\pm 1$, because
    \begin{equation*}
        \res(\tri{x}{n}{k}, x^j) = \pm\prod_{w^j = 0} (\tri{w}{n}{k}) = \pm1.
    \end{equation*}
    For the second factor, the assumption that $k - j$ divides $k$ implies
    \begin{equation*}
        \res(\tri{x}{n}{k}, x^{k - j} - 1)
        =
        \pm \prod_{w^{k - j} = 1} (\tri{w}{n}{k})
        =
        \pm \prod_{w^{k - j} = 1} w^n.
    \end{equation*}
    This last product is
    \begin{equation*}
        \prod_{w^{k - j} = 1} w^n
        = \left( \prod_{w^{k - j} = 1} w =  \right)^n
        = \pm 1,
    \end{equation*}
    where we have used the fact that the product of all roots of unity of a
    given order is $\pm 1$.
\end{proof}

\begin{theorem}
    Sufficiently large trinomial graphs contain cliques of arbitrary size.
    That is,
    \begin{equation*}
        \lim_{n \to \infty} \omega(T(n)) = \infty.
    \end{equation*}
\end{theorem}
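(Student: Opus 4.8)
The plan is to reduce the theorem to a purely number-theoretic construction and then induct. By Lemma~\ref{simple-seq}, a set of indices $K \subseteq \{1, \dots, n-1\}$ forms a clique in $T(n)$ whenever every pair $j < k$ in $K$ satisfies $(k - j) \mid k$; call such a set \emph{compatible}. Note that $(k-j) \mid k$ is equivalent to $\gcd(j,k) = k - j$. Since this condition does not involve $n$, it suffices to exhibit compatible sets of positive integers of every size $m$: once such a set lies inside $\{1, \dots, n-1\}$ it is a clique of size $m$ in $T(n)$. Thus the whole problem becomes the statement that arbitrarily large compatible sets exist.

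I would prove this by induction on $m$, the key being a single size-increasing transformation. Given a compatible set $A$ of size $m$, let $D = \operatorname{lcm}(A)$ and define
\[
    A' = \{D\} \cup \{\, D + D/a : a \in A \,\}.
\]
Each $D/a$ is a positive integer smaller than $D$, so $A'$ consists of $m + 1$ distinct integers with $D$ the smallest. I would then claim that $A'$ is again compatible, which completes the induction from the trivial base case $A = \{1\}$.

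The heart of the argument, and the step I expect to require the most care, is verifying compatibility of $A'$. The pairs involving $D$ are immediate: for $a \in A$ the difference is $D/a$, which divides $D$ because $a \mid D$. For a pair $D + D/a$ and $D + D/b$ with $a > b$, the smaller element is $D + D/a = D(a+1)/a$ and the difference is $D(a - b)/(ab)$; dividing, the relevant quotient is $b(a+1)/(a - b)$, so compatibility of this pair is equivalent to $(a - b) \mid b(a+1)$. Reducing modulo $a - b$ replaces $b$ by $a$, so this is in turn equivalent to $(a - b) \mid a(a + 1)$. But $A$ is compatible and $a > b$, so $(a - b) \mid a$, which certainly gives $(a-b) \mid a(a+1)$. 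Hence every pair of $A'$ is compatible, as claimed. The only real content is this divisibility bookkeeping, together with checking that the quotients are genuine integers, which is guaranteed because $a, b \mid D$.

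Finally I would assemble the pieces. Iterating the transformation from $\{1\}$ produces, for each $m$, a compatible set $A_m$ of size $m$ contained in some finite range $\{1, \dots, M_m\}$. For every $n > M_m$ this set is a clique of size $m$ in $T(n)$, so $\omega(T(n)) \ge m$ for all sufficiently large $n$; letting $m \to \infty$ gives $\lim_{n\to\infty}\omega(T(n)) = \infty$. I would also note that $M_m$ grows extremely fast, roughly like an iterated least common multiple, since $D = \operatorname{lcm}(A_m)$ at each stage; this is exactly why the resulting thresholds, such as the $n > 48\,439\,664$ needed to guarantee a clique of size $6$, are so far from the empirically observed onset.
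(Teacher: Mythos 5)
Your proof is correct and is essentially identical to the paper's: the paper's new sequence $k_i' = \frac{k}{k_i}(k_i+1) = k + k/k_i$ with $k = \operatorname{lcm}(k_1,\dots,k_m)$ is exactly your $A' = \{D\} \cup \{D + D/a : a \in A\}$, and the divisibility checks match (the paper verifies $(a-b)\mid b(a+1)$ directly from $(a-b)\mid b$ rather than via reduction mod $a-b$, but this is the same bookkeeping).
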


\begin{proof}
    Suppose that we have constructed a sequence of positive integers $k_1 < k_2
    < \cdots < k_m$ such that
    \begin{equation}
        \label{k-divis}
        k_i - k_j \quad \text{divides} \quad k_i,\quad i \neq j.
    \end{equation}
    Such a sequence would, by Lemma~\ref{simple-seq}, produce a clique in
    $T(n)$ for any $n > k_m$. Set $k = \operatorname{lcm}(k_1, k_2, \dots,
    k_m)$, and then define the new sequence
    \begin{equation*}
        k_i' = \frac{k}{k_i} (k_i + 1).
    \end{equation*}
    We claim that $k < k_1' < k_2' < \cdots < k_m'$ is a sequence of values
    that satisfies the divisibility condition \eqref{k-divis}, but is one term
    longer.

    To see why this is true, note that
    \begin{equation*}
        \frac{k_i'}{k_i' - k_j'} = \frac{(k_i + 1) k_j}{k_j - k_i}.
    \end{equation*}
    By assumption, $k_j - k_i$ divides $k_j$, so this quotient is an integer.
    It remains to check that $k_i' - k$ divides both $k_i'$ and $k$ for every
    $i$, but this is simple given the following equations:
    \begin{align*}
        \frac{k_i'}{k_i' - k} &= k_i + 1 \\
        \frac{k}{k_i' - k} &= k_i.
    \end{align*}
    This inductive process constructs such sequences of arbitrary length, which
    shows that every $T(n)$ for $n$ large enough will contain arbitrarily large
    cliques.
\end{proof}

To make the preceding theorem concrete, we can begin the inductive process with
$k_1 = 1$. That produces the following cliques:
\begin{align*}
    &\{1\} \\
    &\{1, 2\} \\
    &\{2, 3, 4\} \\
    &\{12, 15, 16, 18\} \\
    &\{720, 760, 765, 768, 780\} \\
    &\{48372480, 48434496, 48435465, 48435712, 48436128, 48439664\}.
\end{align*}
These sequences are theoretically interesting but grow too quickly to be useful
in practice. (It is even nontrivial to verify \emph{a posteriori} that the last
set is a clique in $T(48439665)$.) Nonetheless, they show that there \emph{are}
large cliques in $T(n)$.

Figure~\ref{T-40-100} suggests that $T(n)$ contains a cardioid, a circle, and
has some reflectional symmetry. The following proposition shows that these are
all true.

\begin{proposition}
    $T(n)$ contains a cardioid and (almost) a circle. That is, it contains the
    following edges:
    \begin{itemize}
        \item $\{k, 2k\}$ and $\{\lfloor n / 2 \rfloor + k + n \bmod 2, 2k + n \bmod 2\}$ for $1 \leq k
            \leq n / 2$.
        \item $\{k, k + 1\}$ for $1 \leq k < n - 1$
    \end{itemize}
    Further, $T(n)$ contains $\{i, j\}$ if and only if it contains $\{n - i, n
    - j\}$.
\end{proposition}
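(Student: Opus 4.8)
The plan is to prove the three claims separately, using the resultant machinery from the definitions together with Lemma~\ref{simple-seq} where it applies.

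First consider the cardioid edges $\{k, 2k\}$. Here $j = k$ and the second index is $2k$, so the difference between the two exponents is $|2k - k| = k$, which trivially divides the larger exponent $2k$. By Lemma~\ref{simple-seq} this immediately gives a dyadically resolving pair, hence an edge. So the first family of cardioid edges requires essentially no new work.

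\begin{proof}
We prove the three claims in turn.

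\textbf{Cardioid edges $\{k, 2k\}$.} Take the pair $\tri{x}{n}{2k}$ and $\tri{x}{n}{k}$. The difference of exponents is $2k - k = k$, which divides $2k$. By Lemma~\ref{simple-seq}, these trinomials dyadically resolve, so $\{k, 2k\}$ is an edge of $T(n)$.

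\textbf{Consecutive edges $\{k, k+1\}$.} For the pair $\tri{x}{n}{k+1}$ and $\tri{x}{n}{k}$, the difference of exponents is $1$, which divides every integer and in particular $k+1$. Again by Lemma~\ref{simple-seq}, these dyadically resolve, giving the edge $\{k, k+1\}$.

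\textbf{Reflectional symmetry.} The key observation is that replacing $x$ by $1/x$ sends a trinomial to a reciprocal of another trinomial in the family. Concretely,
\begin{equation*}
    x^n\left(\tfrac{1}{x^n} - \tfrac{1}{x^i} + 1\right) = x^n - x^{n-i} + 1 = \tri{x}{n}{\,n-i}.
\end{equation*}
Thus if $\alpha$ ranges over the roots of $\tri{x}{n}{i}$, then $1/\alpha$ ranges over the roots of $\tri{x}{n}{\,n-i}$ (no root is zero, since the constant term is $1$). Writing the resultant as a product over root differences and factoring out appropriate powers of the roots, one shows that
\begin{equation*}
    \res\bigl(\tri{x}{n}{\,n-i}, \tri{x}{n}{\,n-j}\bigr) = \pm\,\res\bigl(\tri{x}{n}{i}, \tri{x}{n}{j}\bigr),
\end{equation*}
where the sign and any residual factors are themselves signed powers of $2$ (in fact units $\pm 1$, coming from the product of the roots, which is $\pm 1$ since the constant term is $1$). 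Hence one resultant is a signed power of $2$ if and only if the other is, so $\{i, j\}$ is an edge if and only if $\{n-i, n-j\}$ is.

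The circle edges $\{\lfloor n/2\rfloor + k + n \bmod 2,\ 2k + n \bmod 2\}$ then follow by applying this reflection to the cardioid edges $\{k, 2k\}$: the reflection $i \mapsto n - i$ carries the cardioid family onto the circle family, with the $n \bmod 2$ corrections accounting for the floor and the parity of $n$.
\end{proof}

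The parts that require genuine care are the reflection computation and the circle. For the symmetry I would track the leading-coefficient and root-product factors in the resultant precisely, verifying that substituting $x \mapsto 1/x$ introduces only unit factors $\pm 1$; the identity $\prod \alpha = \pm 1$ (constant term $1$) is what keeps everything a signed power of $2$. The main obstacle is the circle: I expect its edges to be exactly the reflections of the cardioid edges under $i \mapsto n - i$, but matching the floor functions and the $n \bmod 2$ corrections to the even and odd cases is the delicate bookkeeping step, and I would verify it by checking the two parities of $n$ separately.
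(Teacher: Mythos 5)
Your proof is correct and reaches every part of the proposition; the paper's own proof is much terser, and the differences between the two are worth recording. For the cardioid edges $\{k,2k\}$ and the consecutive edges $\{k,k+1\}$ you invoke Lemma~\ref{simple-seq} (the difference of exponents divides one of them), while the paper redoes the underlying computation directly, e.g.\ $\res(\tri{x}{n}{k}, \tri{x}{n}{2k}) = \pm\res(\tri{x}{n}{k}, x^k-1) = \pm 1$; these are the same argument, and citing the lemma is the cleaner packaging. Your symmetry argument via $x \mapsto 1/x$ is exactly the paper's one-line remark that reversing the coefficients of a polynomial changes the resultant only by a sign: the residual factor is $\bigl(\prod_\alpha \alpha\bigr)^{-n} = (\pm 1)^{-n}$ because the trinomials are monic with constant term $1$, as you note. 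The one genuinely different move is obtaining the circle family as the reflection of the cardioid rather than by a separate resultant computation (which is what the paper's phrasing suggests it has in mind). That reduction does work, and the bookkeeping you deferred checks out: writing $n = 2m + r$ with $r = n \bmod 2$ and reflecting the cardioid edge $\{m-k,\, 2(m-k)\}$ under $i \mapsto n-i$ gives $\{m + k + r,\, 2k + r\} = \{\lfloor n/2\rfloor + k + n \bmod 2,\ 2k + n \bmod 2\}$ in both parities, so you get the circle for free from the symmetry you already proved. The only loose ends are degenerate boundary cases (e.g.\ $k = n/2$ makes $2k = n$, which is not a vertex), but those are artifacts of the proposition's own statement rather than of your argument.
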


\begin{proof}
    The first two families follow from simple resultant computations. For
    example:
    \begin{align*}
        \res(x^n - x^k + 1, x^n - x^{2k} + 1)
        &= \pm \res(x^n - x^k + 1, x^k - 1) \\
        &= \pm \res(x^n, x^k - 1) \\
        &= \pm 1.
    \end{align*}
    The symmetry follows from noting that the resultant changes only in sign when the
    coefficients of a polynomial are reversed.
\end{proof}

\section{An upper bound on the clique number}

Let $a(k)$ be the smallest $n$ such that there exists a clique of size $k$ in
$T(n)$. The first few values of $a(k)$ are listed below.

\begin{center}
    \begin{tabular}{c | c} $k$ & $a(k)$ \\
        \hline
        2 & 3 \\
        3 & 5 \\
        4 & 5 \\
        5 & 10 \\
        6 & 11 \\
        7 & 22 \\
        8 & 41 \\
        9 & 82 \\
        10 & 1668 \\
        11 & 3000 $<$ ??? $< 10^{2774}$
    \end{tabular}
\end{center}
Computing $a(10) = 1668$ took long enough that we proved the following theorem
while we waited.

\begin{theorem}
    \label{clique-bound} The largest clique in $T(n)$ has size no more than $2
    \lfloor \log_2 n \rfloor - \nu_2(n)$.
\end{theorem}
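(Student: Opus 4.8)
The plan is to translate dyadic resolution into a local constraint on residues modulo powers of $2$, and then count.

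First I would reduce the edge condition to cyclotomic resultants. Exactly as in Lemma~\ref{simple-seq}, for $k > j$ and $d = k - j$,
\[
\res(\tri{x}{n}{k}, \tri{x}{n}{j}) = \pm\!\!\prod_{w^d = 1}(\tri{w}{n}{k}) = \pm\prod_{e \mid d}\res\!\big(\Phi_e,\, \tri{x}{n}{k}\big).
\]
Each factor is an integer, so if the product is $\pm 2^m$ then \emph{every} factor is a signed power of $2$. I would then isolate the factors coming from $e = 2^t$. The linchpin is an oddness lemma: since $2$ is totally ramified in $\mathbb{Q}(\zeta_{2^t})$ with prime $\pi = (1 - \zeta_{2^t})$, and $\zeta_{2^t} \equiv 1 \pmod \pi$, the value $\zeta_{2^t}^n - \zeta_{2^t}^k + 1 \equiv 1 \pmod \pi$ is a $\pi$-unit; hence $\res(\Phi_{2^t}, \tri{x}{n}{k}) = N_{\mathbb{Q}(\zeta_{2^t})/\mathbb{Q}}(\zeta_{2^t}^n - \zeta_{2^t}^k + 1)$ is always odd. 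Therefore this resultant is a power of $2$ \emph{iff} it is $\pm 1$, i.e.\ iff $\zeta_{2^t}^n - \zeta_{2^t}^k + 1$ is a unit---a condition depending only on $n$ and $k$ modulo $2^t$.

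Call a residue $r \bmod 2^t$ \emph{good} when $\zeta_{2^t}^n - \zeta_{2^t}^r + 1$ is a unit. The reduction above yields the central local constraint: if $k, j$ lie in a clique and $2^t \mid (k - j)$, then $r = k \bmod 2^t = j \bmod 2^t$ must be good; equivalently, every \emph{bad} residue class modulo $2^t$ is an independent set in $T(n)$ and so contains at most one clique vertex. I would then arrange the vertices $\{1, \dots, n-1\}$ into the binary trie of residues modulo $2, 4, 8, \dots, 2^{\lfloor\log_2 n\rfloor}$. A clique $S$ maps into this trie, and whenever two of its vertices pass through a common node, that node must be good. Since a binary tree whose branching nodes are all good has at most one more leaf than it has branching nodes, $|S| \le 1 + \#\{\text{good branching nodes}\}$, and it remains to bound how many good nodes can simultaneously branch along a single clique.

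To finish I would determine the good residues level by level. Two computations drive everything: $r \equiv 0$ is always good (the value is the unit $\zeta_{2^t}^n$), and when $2^t \mid n$, so that $\zeta_{2^t}^n = 1$, an odd exponent gives $N(2 - \zeta_{2^t}^k) = \Phi_{2^t}(2) = 2^{2^{t-1}} + 1$, which is odd and exceeds $1$, hence bad. The first fact keeps one good branch alive at every level; the second shows that the bottom $\nu_2(n)$ levels are maximally constraining, collapsing the available good children and removing one branching opportunity per level. Bounding, for a good node at level $t$, the number of good children and telescoping over $t = 1, \dots, \lfloor\log_2 n\rfloor$ should produce exactly $2\lfloor\log_2 n\rfloor - \nu_2(n)$; the tightness of this formula against the table of $a(k)$ for $k \le 7$ suggests that the per-level good-child count is $2$ generically and drops precisely on the low levels governed by $\nu_2(n)$.

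The hard part will be the exact determination of the good residues at each level---deciding when $\zeta_{2^t}^n - \zeta_{2^t}^k + 1$ is a \emph{global} unit, not merely a $\pi$-unit---and proving that the resulting good-child counts telescope to the stated bound. I also expect to spend care at the top of the trie, where residues modulo $2^{\lfloor\log_2 n\rfloor}$ need not separate all clique vertices, and on the boundary case in which $n$ is itself a power of $2$.
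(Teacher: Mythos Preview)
Your framework is sound and parallels the paper's argument closely: both identify bad residue classes modulo $2^t$ as independent sets and bound the clique by counting the good ones. Your oddness lemma (that $\res(\Phi_{2^t}, \tri{x}{n}{k})$ is always odd, via the total ramification of $2$ in $\mathbb{Q}(\zeta_{2^t})$) is a clean device the paper does not isolate; the paper instead argues case by case. Where you organize the count as a trie with good branching nodes, the paper phrases it as an explicit proper coloring, but these are interchangeable.

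The genuine gap is exactly the part you flag as ``hard'': you never determine the good residues. Your two computations ($r\equiv 0$ is always good; for $t\le\nu_2(n)$, odd $r$ is bad) are not enough to force the per-level good count to be $1$ on the low levels and $2$ above. What is needed, and what the paper supplies in Proposition~\ref{missed-classes}, is that for \emph{odd} $n$ the only residues $r\bmod 2^t$ for which $\res(\tri{x}{n}{r}, x^{2^t}-1)$ is a signed power of~$2$ are $r\equiv 0$ and $r\equiv n$. The paper proves this by induction on $t$: the inductive hypothesis leaves four candidates $0,\,2^{t-1},\,n,\,n+2^{t-1}\bmod 2^t$, and the two ``shifted'' ones are eliminated because $\res(\tri{x}{n}{r},\Phi_{2^t})$ then reduces to $\res(x^n+2,\,x^{2^{t-1}}+1)$ or its reciprocal, which by Swan's binomial resultant formula is a power of~$2$ only when $\nu_2(n)=t-1$, impossible for odd $n$. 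Once you have exactly two good residues per level for odd $n$ (and, as you note, exactly one for levels $t\le\nu_2(n)$, which forces the clique into the multiples of $2^{\nu_2(n)}$ and so reduces to $T(n_1)$), your telescoping yields $2\lfloor\log_2 n\rfloor-\nu_2(n)$ on the nose; but without that determination the trie argument cannot close.

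One further caution on the counting: the naive bound $|S|\le 1+\#\{\text{good nodes}\}$ is slightly too weak. You need to use that at the final level $L=\lceil\log_2 n_1\rceil$ the two good classes $0$ and $n_1\bmod 2^L$ contain no vertices of $T(n_1)$ at all, so the recursion terminates at $0$ rather than at $2$; otherwise you will overshoot by~$2$.
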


We will prove this theorem with a graph coloring argument. An \emph{independent
set} in a graph is a collection of vertices which have no edges between them. A
\emph{coloring} of a graph is a partition of its vertices into independent sets,
called ``colors.'' In $T(n)$, an independent set is a collection of trinomials
which are pairwise non-dyadically resolving. If a graph can be colored with $k$
colors, then it contains no cliques of size larger than $k$.

The following lemma shows that independent sets in $T(n)$ can sometimes be
obtained from congruence classes.

\begin{proposition}
    Fix positive integers $i \leq d < n$. If $x^n - x^i + 1$ and $x^d - 1$
    dyadically resolve, then $T(n)$ contains the path $[i, i + d, i + 2d,
    \dots]$. Otherwise, $\{i, i + d, i + 2d, \dots\}$ is an independent set.
\end{proposition}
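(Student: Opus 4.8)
The plan is to reduce every resultant that can arise among the vertices $\{i, i+d, i+2d, \dots\}$ to a product of cyclotomic resultants and then exploit the fact that these factors depend only on the residue of the exponent modulo the cyclotomic index. First I would record the basic reduction: for vertices $a < b$, evaluating $\tri{x}{n}{b}$ at a root $\alpha$ of $\tri{x}{n}{a}$ and using $\alpha^n = \alpha^a - 1$ gives $\tri{\alpha}{n}{b} = -\alpha^a(\alpha^{b-a} - 1)$, so that
\begin{equation*}
    \res(\tri{x}{n}{a}, \tri{x}{n}{b}) = \pm \res(\tri{x}{n}{a}, x^{b-a} - 1),
\end{equation*}
the sign coming from the product of the roots $\alpha$, which is $\pm 1$. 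Thus the edge $\{a, b\}$ is present exactly when $\res(\tri{x}{n}{a}, x^{b-a}-1)$ is a signed power of $2$.

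Since every difference among our vertices is a multiple of $d$, I write $D = b - a$ and factor $x^D - 1 = \prod_{\ell \mid D}\Phi_\ell$. Multiplicativity of the resultant then gives
\begin{equation*}
    \res(\tri{x}{n}{a}, x^{D}-1) = \pm\prod_{\ell \mid D} R_\ell(a), \qquad R_\ell(a) := \res(\tri{x}{n}{a}, \Phi_\ell),
\end{equation*}
and each $R_\ell(a)$ is an integer equal, up to a fixed sign independent of $a$, to $\prod_\zeta \tri{\zeta}{n}{a}$ over the primitive $\ell$th roots of unity $\zeta$. The crucial observation is that $\zeta^a$ depends only on $a \bmod \ell$, so whenever $a \equiv i \pmod \ell$ we have $R_\ell(a) = R_\ell(i)$. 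In particular, since all our vertices are $\equiv i \pmod d$, for every $\ell \mid d$ the factor $R_\ell$ equals the same constant $R_\ell(i)$ regardless of which vertex we start from.

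For the path, I would apply this to consecutive vertices $a = i + jd$ and $a + d$, where $D = d$:
\begin{equation*}
    \res(\tri{x}{n}{a}, \tri{x}{n}{a+d}) = \pm \prod_{\ell \mid d} R_\ell(a) = \pm\prod_{\ell\mid d}R_\ell(i) = \pm\res(\tri{x}{n}{i}, x^{d}-1).
\end{equation*}
By hypothesis $\tri{x}{n}{i}$ and $x^d - 1$ dyadically resolve, so this resultant is a signed power of $2$, the edge $\{a, a+d\}$ is present for every $j$, and the full path appears. Note this guarantees only the consecutive edges: a non-consecutive edge would force $\res(\tri{x}{n}{i}, x^{md}-1)$ to be a power of $2$ for $m \geq 2$, whose extra cyclotomic factors $R_\ell$ with $\ell \mid md$ but $\ell \nmid d$ need not cooperate.

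For the complementary case, suppose $\tri{x}{n}{i}$ and $x^d - 1$ do not dyadically resolve. Then $\prod_{\ell \mid d} R_\ell(i)$ has an odd prime factor $p$, so $p \mid R_{\ell_0}(i)$ for some $\ell_0 \mid d$. Given any two vertices $a = i + jd$ and $b = i + \ell d$, the difference $D = b - a$ is a multiple of $d$ and hence of $\ell_0$, so $\ell_0$ appears among the divisors in the factorization of $\res(\tri{x}{n}{a}, x^{D}-1)$, contributing the factor $R_{\ell_0}(a) = R_{\ell_0}(i)$. Since $p \mid R_{\ell_0}(i)$ divides this integer product, $p$ divides $\res(\tri{x}{n}{a}, \tri{x}{n}{b})$, which therefore cannot be a signed power of $2$; no edge joins the two vertices, and the set is independent. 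The main point to get right is exactly this last step: one must check that the offending factor $R_{\ell_0}$ survives for every offset $a$, and that, because all the $R_\ell$ are integers, divisibility by the odd prime $p$ cannot be cancelled by the remaining factors. The residue-invariance $R_\ell(a) = R_\ell(i)$ for $\ell \mid d$ is what makes both directions go through uniformly in the starting vertex.
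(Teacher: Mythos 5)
Your proof is correct and follows essentially the same route as the paper: reduce $\res(\tri{x}{n}{a},\tri{x}{n}{b})$ to $\pm\res(\tri{x}{n}{a}, x^{b-a}-1)$, observe that the factor coming from $x^d-1$ (which the paper keeps whole and you split into cyclotomic pieces $R_\ell$) depends only on the residue class mod $d$, and conclude both directions from whether that invariant factor is a signed power of $2$. Your cyclotomic decomposition just makes the paper's divisibility step more explicit, and your handling of the independent-set direction (tracking the odd prime through a single surviving factor $R_{\ell_0}$) is a careful filling-in of the same argument.
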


\begin{proof}
    If $k \equiv j \pmod{d}$, then a few Euclidean divisions show that
    \begin{equation*}
        \res(x^n - x^k + 1, x^n - x^j + 1)
        =
        \pm
        \res(x^n - x^k + 1, x^{k - j} - 1).
    \end{equation*}
    Because $d$ divides $k - j$, this resultant is divisible by
    \begin{equation*}
        \res(\tri{x}{n}{k}, x^d - 1).
    \end{equation*}
    This last resultant only depends on the value of $k \bmod d$, so we may
    replace $k$ with any member of the congruence class. If the congruence
    class $k \bmod d$ is not a path, then this final resultant is not a signed
    power of $2$ for some member of the class, from which it follows that the
    edge $\{k, j\}$ does not exist in $T(n)$.
\end{proof}

\begin{lemma}
    If either $x^n + 2$ or $2 x^n + 1$ dyadically resolve with $x^{2^j} + 1$,
    then $\nu_2(n) = j$.
\end{lemma}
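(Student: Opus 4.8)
The plan is to compute the relevant resultant explicitly as a product over the roots of $x^{2^j}+1$, which are exactly the primitive $2^{j+1}$th roots of unity, and to show that this product is a signed power of $2$ only when $\nu_2(n)=j$. Writing $\zeta$ for a primitive $2^{j+1}$th root of unity and using that $x^{2^j}+1$ is monic, the formula $\res(f,g)=\lc(f)^{\deg g}\prod_{f(\alpha)=0}g(\alpha)$ yields
\[
    \res(x^{2^j}+1,\, x^n+2) = \prod_{\zeta}\bigl(\zeta^n + 2\bigr), \qquad \res(x^{2^j}+1,\, 2x^n+1) = \prod_{\zeta}\bigl(2\zeta^n + 1\bigr),
\]
so in both cases everything reduces to understanding the multiset $\{\zeta^n\}$.

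First I would record the orbit structure of the map $\zeta \mapsto \zeta^n$. Setting $v = \nu_2(n)$ and writing $n = 2^v m$ with $m$ odd, the element $\zeta^n$ has order $2^{\max(0,\,j+1-v)}$; and when $v \leq j$, as $\zeta$ ranges over all $2^j$ primitive $2^{j+1}$th roots of unity, $\zeta^n$ ranges over the primitive $2^{j+1-v}$th roots of unity, each attained exactly $2^v$ times. This holds because $m$ is a unit modulo $2^{j+1}$, so multiplication by $m$ permutes the odd residues, after which reduction modulo $2^{j+1-v}$ collapses them uniformly.

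With this in hand the proof splits into three cases. If $v = j$, then every $\zeta^n$ equals $-1$, so $\zeta^n + 2 = 1$ and $2\zeta^n + 1 = -1$; in both cases the resultant is $\pm 1$ and the pair dyadically resolves, which is the asserted situation. If $v \geq j+1$, then $\zeta^n = 1$ for every $\zeta$, each factor equals $3$, and the resultant is $\pm 3^{2^j}$, which is not a power of $2$. If $v < j$, I would group the $2^v$ repetitions and use that $-1$ times a primitive $2^{j+1-v}$th root of unity is again such a root (so the multiset of roots is invariant under negation); the product then collapses to $\Phi_{2^{j+1-v}}$ evaluated at $2$ (respectively at $1/2$, with the leading factor of $2$ pulled out of $\prod(2\eta+1)$), giving in both cases
\[
    \res = \pm\bigl(2^{2^{j-v}} + 1\bigr)^{2^v}.
\]
Since $j - v \geq 1$, the base $2^{2^{j-v}} + 1$ is an odd integer exceeding $1$, so the resultant is not a power of $2$. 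Hence dyadic resolution forces $v = j$, which is the claim.

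The main obstacle I anticipate is the $v < j$ evaluation: one must check that both the $x^n+2$ and the $2x^n+1$ computations land on the same cyclotomic value $\Phi_{2^{j+1-v}}(2) = 2^{2^{j-v}}+1$, and in particular that the factor of $2^{2^{j-v}}$ produced by extracting the leading $2$ from $\prod(2\eta+1)$ exactly cancels the denominator coming from $\Phi_{2^{j+1-v}}(1/2)$. Once the negation symmetry of the root multiset and the formula $\Phi_{2^t}(x) = x^{2^{t-1}}+1$ are invoked, the remaining steps are routine bookkeeping.
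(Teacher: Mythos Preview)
Your argument is correct, but it takes a different route from the paper. The paper quotes Swan's closed-form formula for the resultant of two binomials,
\[
    \res(x^n+2,\, x^{2^j}+1)=\pm\bigl((-2)^{2^j/g}-(-1)^{n/g}\bigr)^g,\qquad g=\gcd(2^j,n),
\]
reads off directly that this is a signed power of $2$ iff $2^j/g=1$ and $n/g$ is odd, i.e.\ $\nu_2(n)=j$, and then handles $2x^n+1$ in one line by observing it is the reciprocal polynomial of $x^n+2$, so the resultant changes only in sign. You instead evaluate the product over the primitive $2^{j+1}$th roots by hand, splitting on whether $v=\nu_2(n)$ is equal to, greater than, or less than $j$, and in the last case you collapse the product using the negation symmetry of the $2^t$th roots together with $\Phi_{2^t}(2)=2^{2^{t-1}}+1$; the $2x^n+1$ case is handled by a parallel computation rather than by the reciprocal trick. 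Your approach is self-contained and does not rely on an external citation---in effect you are re-deriving the relevant special case of Swan's formula---while the paper's proof is shorter and dispatches the second binomial more cleanly.
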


\begin{proof}
    Richard Swan found a remarkable formula for the resultant of two binomials
    \cite{swan}. His formula implies
    \begin{equation*}
        \res(x^n + 2, x^{2^j} + 1) = \pm((-2)^{2^j / g} - (-1)^{n/g})^g
    \end{equation*}
    where $g = \gcd(2^j, n)$. This is a power of 2 if and only if
    \begin{equation*}
        |(-2)^{2^j / g} - (-1)^{n / g}| = 1,
    \end{equation*}
    which is equivalent to $2^j / g = 1$ and $\nu_2(n / g) = 0$. In other words,
    the resultant is a power of 2 if and only if $\nu_2(n) = j$. For the other
    case, note that $2x^n + 1$ is the reciprocal polynomial of $x^n + 2$, so the
    two resultants are the same up to a sign change.
\end{proof}

\begin{proposition}
    \label{missed-classes} If $n$ is odd, then all congruence classes modulo
    $2^j$ are independent sets in $T(n)$ except possibly $0 \bmod 2^j$ and $n
    \bmod 2^j$.
\end{proposition}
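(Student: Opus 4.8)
The plan is to apply the preceding proposition with $d = 2^j$: the class $r \bmod 2^j$ is an independent set unless $\tri{x}{n}{r}$ and $x^{2^j} - 1$ dyadically resolve, so it suffices to show that for every residue $r \not\equiv 0, n \pmod{2^j}$ this resultant carries an odd factor larger than $1$. Factoring $x^{2^j} - 1 = \prod_{t=0}^{j} \Phi_{2^t}(x)$ and using multiplicativity of the resultant, $\res(\tri{x}{n}{i}, x^{2^j}-1) = \pm\prod_{t=0}^j \res(\Phi_{2^t}, \tri{x}{n}{i})$; since each factor is an integer, the product is a signed power of $2$ only if every factor is. I would therefore exhibit a single index $t$ for which the factor $\res(\Phi_{2^t}, \tri{x}{n}{i})$ is odd and exceeds $1$.

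The key computation is the reduction of $\tri{x}{n}{i}$ modulo $\Phi_{2^t}(x) = x^{2^{t-1}}+1$. Writing $x^{2^{t-1}} \equiv -1$, each monomial $x^a$ collapses to $\pm x^{a \bmod 2^{t-1}}$ with a sign determined by bit $t-1$ of $a$. Because $\Phi_{2^t}$ is monic, the Euclidean reduction property of the resultant gives $\res(\Phi_{2^t}, \tri{x}{n}{i}) = \res(\Phi_{2^t}, \text{reduced form})$ with no spurious factors. I would choose $t$ so that this reduced form is a binomial: if $i$ is odd, take $t = \nu_2(n-i)+1$, which merges the two non-constant monomials (their exponents agree modulo $2^{t-1}$ while their signs are opposite, since bit $t-1$ is the lowest bit where $n$ and $i$ differ) into $\pm 2 x^{\rho} + 1$; if $i$ is even, take $t = \nu_2(i)+1$, which collapses $x^i$ into the constant, leaving $\pm x^{\rho} + 2$. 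In both cases $\rho = n \bmod 2^{t-1}$ is odd (as $n$ is odd and $t \ge 2$), and the hypotheses $r \not\equiv 0, n \pmod{2^j}$ guarantee $2 \le t \le j$, so $\Phi_{2^t}$ is genuinely a factor of $x^{2^j}-1$.

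Finally I would evaluate the resultant of $\Phi_{2^t}$ with this binomial. Since $\rho$ is odd, $\zeta \mapsto \zeta^\rho$ permutes the primitive $2^t$-th roots of unity, so in each relevant sign variant the factor collapses to a value of $\Phi_{2^t}$ at $\pm 2$ or $\pm 1/2$, giving $\res(\Phi_{2^t}, \pm 2x^\rho \pm 1) = \res(\Phi_{2^t}, \pm x^\rho \pm 2) = \pm(2^{2^{t-1}} + 1)$, the Fermat number $2^{2^{t-1}}+1$. This is odd and strictly greater than $1$ (as $t \ge 2$), so the full resultant is not a signed power of $2$ and the class is independent. This last step is essentially the content of the preceding lemma on $x^n + 2$ and $2x^n + 1$ (via Swan's formula), and the one thing to get right is the bookkeeping of signs in the reduction: the surviving binomial need not be exactly $x^\rho + 2$ or $2x^\rho + 1$, only one of their sign variants, all of which I would check produce the same odd Fermat factor.
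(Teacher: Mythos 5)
Your proof is correct, and it arrives at the same underlying obstruction as the paper --- a single cyclotomic factor $\Phi_{2^t}(x) = x^{2^{t-1}}+1$ of $x^{2^j}-1$ against which the trinomial reduces to a binomial of the shape $\pm 2x^\rho + 1$ or $\pm x^\rho + 2$, yielding the odd Fermat factor $2^{2^{t-1}}+1$ --- but by a genuinely different route. The paper argues by induction on $j$: the induction hypothesis narrows $k \bmod 2^j$ to the four candidates $0$, $2^{j-1}$, $n$, $n+2^{j-1}$, and the two bad ones are eliminated by a separate lemma that evaluates $\res(x^n+2, x^{2^{j-1}}+1)$ via Swan's formula for resultants of binomials. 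You instead pinpoint the critical level directly, $t = \nu_2(n-i)+1$ for odd $i$ and $t = \nu_2(i)+1$ for even $i$ (the hypotheses $i \not\equiv n \pmod{2^j}$ and $i \not\equiv 0 \pmod{2^j}$ being exactly what keeps $t \le j$ in the respective cases), and you evaluate the resultant by hand using the fact that $\zeta \mapsto \zeta^\rho$ permutes the primitive $2^t$-th roots of unity when $\rho$ is odd. What your version buys is a non-inductive, self-contained argument that dispenses with Swan's formula; what the paper's induction buys is less bookkeeping in the reduction step, since the induction hypothesis hands it the exact residues to check and the signs $z^k = -1$ or $z^k = -z^n$ for free. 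Your sign bookkeeping is the one delicate point and you handle it correctly: the two non-constant monomials merge rather than cancel precisely because bit $t-1$ is the lowest bit where the exponents $n$ and $i$ differ, and all sign variants of the resulting binomial evaluate to $\pm\bigl(2^{2^{t-1}}+1\bigr)$, which is odd and exceeds $1$ since $t \ge 2$.
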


\begin{proof}
    We will show, by induction on $j$, that $\tri{x}{n}{k}$ and $x^{2^j} - 1$
    dyadically resolve only if $k \equiv 0 \pmod{2^j}$ or $k \equiv n
    \pmod{2^j}$. For the base case $j = 1$, we have
    \begin{equation*}
        \res(\tri{x}{n}{k}, x^2 - 1) = (-1)^n - (-1)^k + 1.
    \end{equation*}
    A simple check shows that this is a signed power of 2 exactly when $k
    \equiv n \pmod{2}$ or $k \equiv 0 \pmod{2}$.

    For the inductive step, fix $j \geq 2$ such that $\res(\tri{x}{n}{k},
    x^{2^j} - 1)$ is a signed power of 2. Then $\res(\tri{x}{n}{k}, x^{2^{j -
    1}} - 1)$ is also a signed power of 2---because the latter divides the former---so the induction hypothesis applies. It follows that $k$ is $0$ or $n$ mod
    $2^{j - 1}$, and so there are four possible values for $k \bmod 2^j$:
    \begin{equation*}
        0,\ 2^{j - 1},\ n,\ n + 2^{j - 1}.
    \end{equation*}
    If $k \equiv 2^{j - 1} \pmod{2^j}$, then
    \begin{equation*}
        \res(\tri{x}{n}{k}, \Phi_{2^j}(x))
        = \res(x^n + 2, x^{2^{j - 1}} + 1)
    \end{equation*}
    because $z^k = -1$ for any primitive $2^j$-th root of unity $z$. If this is
    a signed power of 2, then the previous lemma implies $\nu_2(n) = j - 1 \geq
    1$. But we know that $n$ is odd, so this cannot happen. The case $k
    \equiv n + 2^{j - 1} \pmod{2^j}$ is similarly dispatched.
\end{proof}

\begin{proposition}
    \label{independent-sets}
    If $n = 2^{\nu_2(n)} n_1$, then the following congruences classes are
    independent sets in $T(n)$:
    \begin{align*}
        &\{k \equiv 1 \pmod{2}\} \\
        &\{k \equiv 2 \pmod{4}\} \\
        &\{k \equiv 4 \pmod{8}\} \\
        &\cdots \\
        &\{k \equiv 2^{\nu_2(n) - 1} \pmod{2^{\nu_2(n)}}\}.
    \end{align*}
    The vertices not included in these sets are $\{2^{\nu_2(n)} j \mid 0 < j <
    n_1\}$. They form an isomorphic copy of $T(n_1)$.
\end{proposition}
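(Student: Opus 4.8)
The plan is to prove the two assertions separately: first that each listed congruence class is an independent set, then that the leftover vertices form a copy of $T(n_1)$. Write $\nu = \nu_2(n)$ and note that the class $\{k \equiv 2^t \pmod{2^{t+1}}\}$ is exactly $\{k : \nu_2(k) = t\}$. Hence the union of the listed classes is $\{k : \nu_2(k) < \nu\}$, and its complement in $\{1,\dots,n-1\}$ is $\{2^\nu j : 0 < j < n_1\}$, which matches the claimed leftover set.

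For independence, I would invoke the earlier proposition on congruence classes with residue $i = 2^t$ and modulus $d = 2^{t+1}$: that class is independent provided $\tri{x}{n}{2^t}$ and $x^{2^{t+1}} - 1$ do not dyadically resolve. Factoring $x^{2^{t+1}} - 1 = \prod_{s=0}^{t+1} \Phi_{2^s}(x)$ into cyclotomics splits the resultant into a product of integer factors, and a product of integers is a signed power of $2$ only if every factor is, so it suffices to exhibit one bad factor. Taking $s = t+1$ and using $z^{2^t} = -1$ for a primitive $2^{t+1}$-th root of unity $z$, the corresponding factor equals $\res(x^n + 2, x^{2^t} + 1)$, which by the Swan resultant lemma is a signed power of $2$ if and only if $\nu_2(n) = t$. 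Since $t < \nu$, this fails, so the full resultant is not a power of $2$ and the class is independent; this parallels the argument in Proposition~\ref{missed-classes}.

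For the isomorphism, the natural candidate is the bijection $2^\nu j \mapsto j$ from the leftover set onto $\{1, \dots, n_1 - 1\}$, the vertex set of $T(n_1)$. The key identity is that, setting $y = x^{2^\nu}$, one has $\tri{x}{n}{2^\nu a} = y^{n_1} - y^a + 1 = \tri{y}{n_1}{a}$. I would then apply the composition formula $\res(f(x^m), g(x^m)) = \res(f, g)^m$ for monic $f, g$ with nonzero constant term, which holds because each nonzero root $\alpha$ of $f$ contributes exactly $m$ preimages $\rho$ with $\rho^m = \alpha$ and $g(\rho^m) = g(\alpha)$. This yields $\res(\tri{x}{n}{2^\nu a}, \tri{x}{n}{2^\nu b}) = \res(\tri{y}{n_1}{a}, \tri{y}{n_1}{b})^{2^\nu}$. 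Since an integer is a signed power of $2$ exactly when its $2^\nu$-th power is, the edge $\{2^\nu a, 2^\nu b\}$ lies in $T(n)$ if and only if $\{a, b\}$ lies in $T(n_1)$, giving the isomorphism.

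The main obstacle I anticipate is getting the resultant composition formula exactly right and verifying its hypotheses: the trinomials have constant term $1$, so $0$ is never a root and each $x^m - \alpha$ contributes exactly $m$ distinct preimages, which is what produces the clean exponent $2^\nu$. The remaining verifications---that a product of integers is a power of $2$ iff each factor is, and that passing to a $2^\nu$-th power preserves the signed-power-of-$2$ property in both directions---are elementary but must be stated to justify transferring dyadic resolvability across the bijection.
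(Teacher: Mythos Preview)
Your proposal is correct and follows essentially the same route as the paper: isolate the cyclotomic factor $\Phi_{2^{t+1}}$ in the resultant and use $z^{2^t}=-1$ for a primitive $2^{t+1}$-th root, then use the composition identity $\res(f(x^m),g(x^m))=\res(f,g)^m$ for the isomorphism. The only difference is that the paper also uses $z^n=1$ (valid since $t+1\le\nu_2(n)$) to evaluate that cyclotomic factor directly as $3^{2^t}$, whereas you keep $z^n$ and route through the Swan lemma---an unnecessary detour, but a valid one.
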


\begin{proof}
    If $k \equiv j \equiv 2^{i - 1} \pmod{2^i}$ where $i \leq \nu_2(n)$, then
    \begin{equation*}
        \res(\tri{x}{n}{k}, \tri{x}{n}{j})
        =
        \res(\tri{x}{n}{k}, x^{k - j} - 1)
    \end{equation*}
    is divisible by
    \begin{equation*}
        \res(\tri{x}{n}{k}, \Phi_{2^i}(x)).
    \end{equation*}
    If $z$ is a root of unity of order $2^i$, then $z^n - z^k + 1 = 3$ because
    $z^k = -1$ and $z^n = 1$, so this resultant is $3^{\deg \Phi_{2^i}(x)} =
    3^{2^{i - 1}}$.

    The vertices not in any of these congruence classes are of the form
    $2^{\nu_2(n)} j$ for $0 < j < n_1$. To see that these form an isomorphic copy
    of $T(n_1)$, note that
    \begin{equation*}
        \res(f(x^a), g(x^a)) = \res(f(x), g(x))^a
    \end{equation*}
    for any positive integer $a$, meaning that $f(x^a)$ and $g(x^a)$ dyadically
    resolve if and only if $f(x)$ and $g(x)$ do. Thus, if $g = \gcd(n, k, j)$,
    then $\{k, j\}$ is an edge in $T(n)$ if and only if $\{k/g, j/g\}$ is an
    edge in $T(n/g)$.
\end{proof}

\paragraph{Proof of Theorem~\ref{clique-bound}} We will now prove
Theorem~\ref{clique-bound} using the preceding results.

\begin{proof}
    Write $n = 2^{\nu_2(n)} n_1$. Lemma~\ref{independent-sets} states that we can
    form $\nu_2(n)$ independent sets, discard them, and be left with an
    isomorphic copy of $T(n_1)$. If $n$ is a power of 2, then $n_1 = 1$ and
    every vertex is covered by these independent sets, meaning that $T(n)$ has
    been colored with $\nu_2(n) = 2 \lfloor \log_2 n \rfloor - \nu_2(n)$ colors.

    If $n$ is not a power of 2, then $n_1 > 1$ is odd, and
    Lemma~\ref{missed-classes} states that the congruence classes modulo $4$ are
    independent sets in $T(n)$ except for possibly $0 \bmod 4$ and $n_1 \bmod
    4$. Thus, there are at least two independent sets among the congruence
    classes mod $4$. Choose any two, color them different colors, then discard
    these vertices. If we consider the remaining two congruence classes modulo
    $8$ instead of $4$, they split into four distinct classes.
    Lemma~\ref{missed-classes} implies again that two of these will be
    independent sets, so color those two new colors and refine the remaining two
    classes modulo 16. Repeat this argument modulo 16, 32, 64, and so on, until
    the classes contains only a single vertex of $T(n)$. This process places
    every vertex of $T(n)$ into an independent set, begins at $4 = 2^2$,
    terminates at $2^{\lceil \log_2 n_1 \rceil}$, and produced two independent
    sets at each step. Therefore, $T(n)$ has been colored with
    \begin{equation*}
        2 (\lceil \log_2 n_1 \rceil - 1) + \nu_2(n) = 2 \lfloor \log_2 n \rfloor - \nu_2(n)
    \end{equation*}
    colors.
\end{proof}

\section{Connection to relative primality and the reduced resultant}
\label{sec:equivalence}

Our goal in this section is to give the technical argument which links the
moduli of \cite{zima} to our trinomials. This requires two steps: an argument
relating scalability to the B\'ezout equation, and then an argument relating the
B\'ezout equation to resultants. The arguments of this section rely on a
lesser-known quantity called the \emph{reduced resultant}. We will begin by
gathering some basic definitions and facts about the reduced resultant.

\subsection{Reduced resultants}

For the remainder of this section, we will assume $f, g \in \mathbb{Z}[x]$ are
monic and have unique B\'ezout cofactors $a, b \in \mathbb{Q}[x]$,
meaning that
\begin{equation*}
    a(x) f(x) + b(x) g(x) = 1
\end{equation*}
and $\deg a < \deg g$, $\deg b < \deg f$. When we say ``resultant,'' we mean
``resultant over $\mathbb{Q}$'' unless otherwise specified.

\begin{definition}
    Fix integer coefficient polynomials $f(x)$ and $g(x)$. The smallest positive
    integer obtainable as a $\mathbb{Z}[x]$-linear combination of $f(x)$ and
    $g(x)$ is their \emph{reduced resultant}.
\end{definition}

The set of all integers obtainable as a $\mathbb{Z}[x]$-linear combination of
$f$ and $g$ is an ideal in $\mathbb{Z}$. If $f$ and $g$ are relatively prime,
then this ideal is nontrivial, contains the resultant, and is generated by the
reduced resultant. In general, the reduced resultant does not equal the
resultant.

\paragraph{Example} The resultant of $x^2 - 1$ and $x^2 + 3$ is 16, but their
reduced resultant is 4. To see this, note that
\begin{equation*}
    -\frac{x^2 - 1}{4} + \frac{x^2 + 3}{4} = 1
\end{equation*}
and use the following proposition.

\begin{proposition}
    The reduced resultant of $f$ and $g$ is the least common multiple of the
    reduced denominators of the coefficients of the B\'ezout cofactors $a$
    and $b$.
\end{proposition}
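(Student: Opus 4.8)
The plan is to prove equality by showing the two quantities divide each other. Write $c$ for the reduced resultant of $f$ and $g$, and let $d_a$ and $d_b$ be the least common denominators of the coefficients of $a$ and $b$, so that the claimed value is $m = \operatorname{lcm}(d_a, d_b)$, which is precisely the smallest positive integer with $ma, mb \in \mathbb{Z}[x]$. Since $af + bg = 1$ forces $f$ and $g$ to be relatively prime, the integers obtainable as $\mathbb{Z}[x]$-combinations of $f$ and $g$ form a nontrivial ideal of $\mathbb{Z}$ generated by $c$; it therefore suffices to establish $c \mid m$ and $m \mid c$.

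The direction $c \mid m$ is immediate: multiplying $af + bg = 1$ by $m$ gives $(ma)f + (mb)g = m$, and by construction $ma, mb \in \mathbb{Z}[x]$, so $m$ is an integer $\mathbb{Z}[x]$-combination of $f$ and $g$ and hence a multiple of $c$.

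For the reverse, I would start from any representation $c = uf + vg$ with $u, v \in \mathbb{Z}[x]$, which exists because $c$ lies in the ideal. Because $g$ is monic, dividing $u$ by $g$ stays within $\mathbb{Z}[x]$: write $u = gQ + u_0$ with $Q, u_0 \in \mathbb{Z}[x]$ and $\deg u_0 < \deg g$, so that $c = u_0 f + v_0 g$ with $v_0 = v + Qf \in \mathbb{Z}[x]$. The crucial step is a degree check showing $\deg v_0 < \deg f$: since $\deg(u_0 f) < \deg f + \deg g$ and $g$ is monic, any term of $v_0$ of degree at least $\deg f$ would make $v_0 g$ have degree at least $\deg f + \deg g$, which cannot cancel and would make the right-hand side non-constant. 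With both degree bounds in hand, dividing by $c$ shows that $(u_0/c, v_0/c)$ satisfies the B\'ezout equation within the degree ranges defining the cofactors, so the assumed uniqueness gives $u_0/c = a$ and $v_0/c = b$. Hence $ca = u_0 \in \mathbb{Z}[x]$ and $cb = v_0 \in \mathbb{Z}[x]$, forcing $d_a \mid c$ and $d_b \mid c$, and therefore $m \mid c$.

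I expect the degree bookkeeping in the reverse direction to be the only real obstacle, since one must land the reduced cofactors $u_0, v_0$ in exactly the degree ranges that make the uniqueness hypothesis applicable. The remaining ingredients—coprimality of $f$ and $g$, the integrality of division by a monic polynomial, and the identification of $m$ with the least common denominator—are routine.
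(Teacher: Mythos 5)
Your proposal is correct and follows essentially the same route as the paper: reduce an arbitrary $\mathbb{Z}[x]$-combination realizing the reduced resultant to one with $\deg$-bounded coefficients via Euclidean division by the monic polynomials, then invoke uniqueness of the B\'ezout cofactors to identify the rescaled coefficients with $a$ and $b$. The only cosmetic difference is that the paper divides both cofactors and cancels the quotients via a degree comparison on $fg$, whereas you divide only one and deduce the other degree bound directly; the substance is identical.
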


This proof is due to Gerry Myerson \cite{myerson}.

\begin{proof}
    Let $m$ be an integer which can be written as
    \begin{equation}
        \label{Zx-comb}
        A(x) f(x) + B(x) g(x) = m
    \end{equation}
    for some $A, B \in \mathbb{Z}[x]$ with no restrictions on $A$ and $B$. If
    we divide $A$ by $g$ and $B$ by $f$, then we can write
    \begin{align*}
        A &= g q_1 + r_1 \\
        B &= f q_2 + r_2,
    \end{align*}
    where $\deg r_1 < g$ and $\deg r_2 < f$. It follows that
    \begin{equation*}
        r_1 f + r_2 g + fg (q_1 + q_2) = m.
    \end{equation*}
    Because $\deg fg > \deg(r_1 f + r_2 g)$ and the left-hand side equals a
    constant, we must have $q_1 + q_2 = 0$, so
    \begin{equation*}
        r_1 f + r_2 g = m.
    \end{equation*}
    It follows that every integer obtainable as a $\mathbb{Z}[x]$-linear
    combination of $f$ and $g$ can be obtained by one with $\deg A < \deg g$
    and $\deg B < \deg f$. If we divide \eqref{Zx-comb} by $m$, then the
    uniquess of B\'ezout cofactors shows that $A / m$ and $B / m$ \emph{are}
    the B\'ezout cofactors of $f$ and $g$. It follows that the reduced
    resultant is the smallest positive integer $m$ such that $ma$ and $mb$ have
    integer coefficients.
\end{proof}

The most useful property of the reduced resultant, for our purposes, is the
following proposition.

\begin{proposition}
    \label{same-primes}
    If $f(x)$ and $g(x)$ are monic, then their reduced resultant and resultant
    have the same prime divisors.
\end{proposition}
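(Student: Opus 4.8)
The plan is to prove the statement as two inclusions on sets of prime divisors. Write $r$ for the reduced resultant and $R = \res(f, g)$ for the resultant, and recall that since $f$ and $g$ are monic they are relatively prime over $\mathbb{Q}$ precisely when $R \neq 0$; I assume this throughout, as the reduced resultant is only interesting in that case. The goal is to show that a prime $p$ divides $r$ if and only if it divides $R$.

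First I would handle the inclusion $\{\text{primes of } r\} \subseteq \{\text{primes of } R\}$, which is immediate from the facts already recorded above: the integers expressible as $\mathbb{Z}[x]$-linear combinations of $f$ and $g$ form an ideal of $\mathbb{Z}$ that is generated by $r$ and contains $R$. Hence $r \mid R$, and every prime dividing $r$ divides $R$.

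The substantive direction is the reverse inclusion, and this is where the monic hypothesis does the work. Fix a prime $p \mid R$; I want $p \mid r$. By the previous proposition there are integer polynomials $a', b' \in \mathbb{Z}[x]$ with $a'(x) f(x) + b'(x) g(x) = r$ (namely $a' = r a$ and $b' = r b$ for the B\'ezout cofactors $a, b$). Suppose toward a contradiction that $p \nmid r$. Reducing this identity modulo $p$ gives $\bar{a}' \bar{f} + \bar{b}' \bar{g} = \bar{r}$ in $\mathbb{F}_p[x]$ with $\bar{r} \neq 0$ a nonzero constant, so $\bar{f}$ and $\bar{g}$ generate the unit ideal and are therefore coprime in $\mathbb{F}_p[x]$. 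That forces $\res(\bar{f}, \bar{g}) \neq 0$. Because $f$ and $g$ are monic, reduction mod $p$ preserves their degrees and leading coefficients, so the resultant commutes with reduction: $R \bmod p = \res(\bar{f}, \bar{g}) \neq 0$, i.e.\ $p \nmid R$, contradicting the choice of $p$. Hence $p \mid r$, and the two inclusions together give the claim.

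I expect the main obstacle to be justifying that the resultant reduces correctly modulo $p$, that is, $\res(f, g) \equiv \res(\bar{f}, \bar{g}) \pmod{p}$. This is exactly the step that can fail for non-monic polynomials---when a leading coefficient vanishes mod $p$ the degree drops and the identity breaks---and it is the reason the monic hypothesis is needed. Granting this standard fact, the argument is short; the only real content is the mod-$p$ reduction of the B\'ezout-type identity that realizes $r$.
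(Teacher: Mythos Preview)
Your proof is correct and follows essentially the same route as the paper: both directions use $r \mid R$ for the easy inclusion, and for the other direction both reduce the integral B\'ezout identity $a'f + b'g = r$ modulo a prime $p \nmid r$ to get coprimality of $\bar f, \bar g$ in $\mathbb{F}_p[x]$, then invoke the monic hypothesis to identify $\res(\bar f,\bar g)$ with $R \bmod p$. Your framing as a contradiction (start with $p \mid R$) versus the paper's direct contrapositive (start with $p \nmid r$) is purely cosmetic.
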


\begin{proof}
    Because the reduced resultant divides the resultant, the primes which divide
    the reduced resultant also divide the resultant. On the other hand, suppose
    that a prime $p$ does \emph{not} divide the reduced resultant. If $d$ is the
    reduced resultant of $f$ and $g$, then we can write
    \begin{equation*}
        A(x) f(x) + B(x) g(x) = d
    \end{equation*}
    for some $A, B \in \mathbb{Z}[x]$. If we reduce this modulo $p$, we obtain
    a nonzero value on the right-hand side, which implies that $f$ and $g$ are
    relatively prime over $\mathbb{F}_p$, and therefore have a nonzero
    resultant over $\mathbb{F}_p$. Because $f$ and $g$ are monic, their
    resultant over $\mathbb{F}_p$ is just their resultant over $\mathbb{Q}$
    reduced modulo $p$, so their resultant over $\mathbb{Q}$ is also not
    divisible by $p$.
\end{proof}

\begin{theorem}
    If the coefficients of the B\'ezout cofactors of $f(x)$ and $g(x)$ are
    dyadic, then $f(x)$ and $g(x)$ dyadically resolve.
\end{theorem}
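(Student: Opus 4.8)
The plan is to route the whole argument through the reduced resultant, leaning on the two propositions just established; the crux is that the hypothesis ``dyadic coefficients'' is exactly a statement about the denominators appearing in the B\'ezout cofactors, and the reduced resultant records those denominators precisely.

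First I would observe that the standing assumption $a(x) f(x) + b(x) g(x) = 1$ forces $f$ and $g$ to be relatively prime over $\mathbb{Q}$, so $\res(f, g)$ is a \emph{nonzero} integer (an integer because $f$ and $g$ are monic with integer coefficients). This disposes of the degenerate case $\res(f, g) = 0$ and lets us reason about its prime divisors. Next I would apply the proposition characterizing the reduced resultant $d$ of $f$ and $g$ as the least common multiple of the reduced denominators of the coefficients of $a$ and $b$. Since every such coefficient is dyadic, each reduced denominator is a power of $2$, and the least common multiple of powers of $2$ is again a power of $2$. Hence $d = 2^m$ for some integer $m \geq 0$.

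Finally I would invoke Proposition~\ref{same-primes}, which states that for monic $f$ and $g$ the reduced resultant and the resultant have the same prime divisors. Because the only prime dividing $d = 2^m$ is $2$, the only prime that can divide $\res(f, g)$ is $2$ as well. Combined with the fact that $\res(f, g)$ is a nonzero integer, this forces $\res(f, g) = \pm 2^k$ for some $k \geq 0$, which is exactly the definition of $f$ and $g$ dyadically resolving.

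I do not expect a genuine obstacle, since the heavy lifting is already packaged into the two propositions on the reduced resultant. The only points demanding care are the elementary bookkeeping claims---that a dyadic rational has a power-of-two reduced denominator, that the least common multiple of powers of two is again a power of two, and that the resultant is a nonzero integer because $af + bg = 1$---and each of these is routine.
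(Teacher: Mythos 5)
Your proposal is correct and follows essentially the same route as the paper: show the reduced resultant is a power of $2$ (you via the lcm-of-denominators characterization, the paper via clearing denominators to get a power of $2$ as a $\mathbb{Z}[x]$-combination, which the reduced resultant divides), then apply Proposition~\ref{same-primes}. Your added remark that $\res(f,g)$ is a nonzero integer is a small but welcome piece of bookkeeping the paper leaves implicit.
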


\begin{proof}
    If the coefficients of the cofactors of $f$ and $g$ are dyadic, then we can
    obtain a power of 2 as a $\mathbb{Z}[x]$-linear combination of $f$ and $g$.
    The reduced resultant is therefore a power of 2, because it divides all
    integers which are $\mathbb{Z}[x]$-linear combinations of $f$ and $g$.
    Proposition~\ref{same-primes} implies that the only prime which could
    divide the resultant of $f$ and $g$ is $2$.
\end{proof}

\subsection{Scalability}

\begin{definition}
    \label{scalable-def}
    The moduli $2^n - 2^k + 1$ and $2^n - 2^j + 1$ are \emph{scalable} if there
    exist polynomials $a(x)$ and $b(x)$ of degree at most $n$ such that:
    \begin{itemize}
        \item $a$ and $b$ have dyadic coefficients;
        \item $a(0)$ and $b(0)$ are integers;
        \item the leading coefficients of $a$ and $b$ are positive; and
    \end{itemize}
    \begin{align*}
        (2^{cn} - 2^{ck} + 1) a(2^c) &\equiv 1 \pmod{2^{cn} - 2^{cj} + 1} \\
        (2^{cn} - 2^{cj} + 1) b(2^c) &\equiv 1 \pmod{2^{cn} - 2^{ck} + 1}
    \end{align*}
    for all sufficiently large $c$.
\end{definition}

\begin{lemma}
    If $f(x)$ and $g(x)$ are polynomials over $\mathbb{Q}$ where
    $f(n)$ and $g(n)$ are integers such that $f(n) \equiv 0 \pmod{g(n)}$ for
    infinitely many integers $n$, then $f(x) \equiv 0 \pmod{g(x)}$.
\end{lemma}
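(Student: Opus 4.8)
The plan is to reduce polynomial divisibility to a statement about the remainder under Euclidean division in $\mathbb{Q}[x]$. First I would write $f(x) = g(x) q(x) + r(x)$ with $q, r \in \mathbb{Q}[x]$ and $\deg r < \deg g$ (assuming $g \neq 0$; if $g = 0$ the hypothesis forces $f = 0$ and the claim is trivial). Then $f \equiv 0 \pmod{g}$ is equivalent to $r \equiv 0$, and since $r$ is a polynomial it suffices to show that $r$ has infinitely many roots.

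The key observation is that the hypothesis makes $f(n)/g(n)$ an integer for all $n$ in some infinite set $S$; after discarding the finitely many $n \in S$ with $g(n) = 0$, the set is still infinite. Dividing the Euclidean identity by $g(n)$ gives $f(n)/g(n) = q(n) + r(n)/g(n)$. The obstacle here is that $q(n)$ need not be an integer, so I cannot directly conclude anything integral about $r(n)/g(n)$. To get around this, I would choose a positive integer $M$ clearing the denominators of the coefficients of $q$, so that $Mq \in \mathbb{Z}[x]$ and hence $Mq(n) \in \mathbb{Z}$ for every integer $n$. Then $M r(n)/g(n) = M f(n)/g(n) - Mq(n)$ is a difference of two integers, hence an integer, for every $n$ in the infinite set.

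Finally I would invoke the degree inequality $\deg r < \deg g$: as $n \to \infty$ the rational function $M r(x)/g(x)$ tends to $0$, so $|Mr(n)/g(n)| < 1$ for all large enough $n$. An integer of absolute value less than $1$ is zero, so $r(n) = 0$ for all sufficiently large $n \in S$. Thus $r$ has infinitely many roots and must be the zero polynomial, giving $g(x) \mid f(x)$ as claimed. I expect the only delicate point to be the denominator-clearing step, which converts the difficulty that ``$q(n)$ is merely rational'' into an honest integrality statement; once $Mr(n)/g(n)$ is known to be an integer, the degree comparison finishes the argument routinely.
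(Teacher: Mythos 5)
Your proposal is correct and is essentially the paper's own argument: both perform the Euclidean division $f = gq + r$, clear the denominators of $q$ to extract an integrality statement (your ``$Mr(n)/g(n) \in \mathbb{Z}$'' is exactly the paper's bound $|r(n)| \geq |g(n)|/\operatorname{denom}(q)$ when $r(n) \neq 0$), and then use $\deg r < \deg g$ to force $r(n) = 0$ for infinitely many $n$ and hence $r = 0$. Your write-up is, if anything, slightly more careful about the edge cases ($g = 0$, zeros of $g$) than the paper's.
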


\begin{proof}
    If we Euclidean divide $f(x)$ by $g(x)$, we obtain
    \begin{equation*}
        f(x) = g(x) q(x) + r(x)
    \end{equation*}
    where $\deg r < \deg g$. If $g(n)$ divides $f(n)$ and $f(n) \neq g(n) q(n)$,
    then
    \begin{equation*}
        |r(n)| = |f(n) - g(n) q(n)| \geq \frac{|g(n)|}{\operatorname{denom}(q)}.
    \end{equation*}
    But $|r(n)|$ is strictly smaller than any constant times $|g(n)|$ for
    sufficiently large $n$, because $\deg r < \deg g$. This implies that $f(n) =
    g(n) q(n)$ for all sufficiently large $n$, thus $f(x) = g(x) q(x)$.
\end{proof}

\begin{theorem}
    The moduli $2^n - 2^k + 1$ and $2^n - 2^j + 1$ are scalable if and only if $\tri{x}{n}{k}$ and $\tri{x}{n}{j}$ dyadically resolve.
\end{theorem}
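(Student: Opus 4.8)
The plan is to prove the two implications separately, writing $F(x) = \tri{x}{n}{k}$ and $G(x) = \tri{x}{n}{j}$ so that the moduli are exactly $F(2^c)$ and $G(2^c)$, and noting that $F$ and $G$ are monic with integer coefficients and $F(0) = G(0) = 1$. The reverse implication (dyadic resolution $\Rightarrow$ scalability) is essentially a normalization argument on the B\'ezout cofactors, using the reduced-resultant machinery of this section, while the forward implication (scalability $\Rightarrow$ dyadic resolution) passes from arithmetic divisibility at the sample points $x = 2^c$ to polynomial divisibility over $\mathbb{Q}$ via the preceding divisibility lemma.

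For dyadic resolution $\Rightarrow$ scalability, I would start from $\res(F, G) = \pm 2^r$. Since $F$ and $G$ are monic, Proposition~\ref{same-primes} shows the reduced resultant has the same prime divisors, hence is itself a power of $2$; the proposition identifying the reduced resultant with the least common multiple of the reduced denominators of the cofactors then forces the unique cofactors $a_0, b_0$ (of degree $< n$) to be dyadic. The key observation is that the two congruences in Definition~\ref{scalable-def} are \emph{independent}: one concerns the inverse of $F$ modulo $G$, the other the inverse of $G$ modulo $F$, so I need not preserve a single identity $a_0 F + b_0 G = 1$. I would therefore adjust each cofactor separately, replacing $a_0$ by $a = a_0 + t\,G$ for a dyadic constant $t = m - a_0(0)$ with $m$ a large enough integer. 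This keeps $a \equiv a_0 \pmod{G}$ (preserving the inverse relation), keeps $a$ dyadic, makes $a(0) = m$ an integer, and gives $a$ degree exactly $n$ with positive leading coefficient $t$; the same works for $b = b_0 + t' F$. A short check then confirms that $a(2^c)\in\mathbb{Z}$ for large $c$ and that $(aF-1)/G$ is dyadic with integer constant term $m-1$, so its value at $2^c$ is an integer and $F(2^c)\,a(2^c) \equiv 1 \pmod{G(2^c)}$ holds for all large $c$.

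For scalability $\Rightarrow$ dyadic resolution, I would use only the first congruence. Scalability supplies a dyadic $a$ with integer constant term such that $G(2^c)$ divides $F(2^c)\,a(2^c) - 1$ for all large $c$; since $a(0)\in\mathbb{Z}$, both $F(x)a(x)-1$ and $G(x)$ take integer values at $x = 2^c$. Applying the divisibility lemma along the sequence $x = 2^c$ (its proof only requires the arguments to tend to infinity, which $2^c$ does) yields $G(x) \mid F(x)a(x) - 1$ in $\mathbb{Q}[x]$. The quotient $q = (Fa - 1)/G$ is dyadic, because dividing a dyadic polynomial by the monic integer polynomial $G$ preserves dyadic coefficients, so $aF - qG = 1$ is a B\'ezout identity with dyadic cofactors; the theorem stating that dyadic cofactors imply dyadic resolution then finishes the argument.

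The main obstacle is the bookkeeping in the dyadic $\Rightarrow$ scalable direction: Definition~\ref{scalable-def} demands dyadic coefficients, integer constant term, positive leading coefficient, and degree at most $n$ all at once, and these cannot be extracted from the unique degree-$<n$ cofactors while insisting on the single relation $aF + bG = 1$, since the required leading-coefficient signs would conflict. Recognizing that the two congruences decouple is what makes the separate positive dyadic shifts possible. A secondary point needing care is verifying that the divisibility lemma genuinely applies to the sparse sample set $\{2^c\}$ and that the resulting quotient stays dyadic and integer-valued at $2^c$; both follow from monicity of $G$ and the integrality of $a(0)$, but they should be stated explicitly.
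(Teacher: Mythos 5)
Your proposal is correct and follows essentially the same route as the paper: the same divisibility lemma converts the congruences at $x = 2^c$ into a polynomial B\'ezout identity with dyadic cofactors, and the reduced-resultant propositions (same prime divisors; reduced resultant as the lcm of the cofactor denominators) carry the argument in both directions. Your one departure---decoupling the two congruences and normalizing each cofactor with a single shift $a = a_0 + tG$ for a large dyadic constant $t$---is a cleaner version of the paper's two-stage adjustment, which insists on preserving $af + bg = 1$ and must then be repeated in full on $b$; this is tidier bookkeeping rather than a different idea.
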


\begin{proof}
    Define the polynomials
    \begin{align*}
        f(x) &= x^n - x^k + 1 \\
        g(x) &= x^n - x^j + 1.
    \end{align*}
    If the moduli are scalable, then there is some inverse
    polynomial $A(x)$ with dyadic coefficients such that $A(2^c) f(2^c) \equiv
    1 \pmod{g(2^c)}$ for all sufficiently large $c$. The previous lemma implies
    that $A(x) f(x) - 1$ is a multiple of $g(x)$, say $A(x) f(x) - B(x) g(x) =
    1$. Further, because $A$, $f$, and $g$ have dyadic coefficients, so does
    $B$. By clearing denominators, we obtain a power of $2$ as a
    $\mathbb{Z}[x]$-linear combination of $f$ and $g$, which implies that their
    reduced resultant is a power of 2. By Proposition~\ref{same-primes}, $f$
    and $g$ dyadically resolve.

    On the other hand, if $f$ and $g$ dyadically resolve, then their reduced
    B\'ezout cofactors $a$ and $b$ in
    \begin{equation*}
        a(x) f(x) + b(x) g(x) = 1
    \end{equation*}
    can be constructed using only dyadic coefficients. These cofactors are
    essentially our inverse polynomials, but they need some minor adjustments to
    match our definition.

    Without loss of generality, we will describe how to make $a$ match the
    definition. If $a(0)$ is not an integer, then the equation $a(0) + b(0) = 1$
    implies $a(0) = c/2^m$ and $b(0) = 1 - c/2^m$ for some integers $c$ and $m >
    1$. Replacing $a$ with $a - gc / 2^m$ and $b$ with $b + fc / 2^m$ makes the
    constant terms integers (at the expense of having degree at most $n$ rather
    than $n - 1$). Now it is true that $a(2^c) f(2^c) \equiv 1 \pmod{g(2^c)}$
    for sufficiently large $c$. If the leading coefficient of $a$ is positive,
    then we are done. If the leading coefficient is negative, then we can
    replace $a$ with $a + c' g$ and $g$ with $g - c' f$ for some sufficiently
    large integer $c'$. Now $a$ satisfies the definition. (Note that the $b$
    which results from this process has a negative leading coefficient. It must
    be repeated in full on $b$ to get a suitable inverse polynomial.)
\end{proof}

\section{Conclusion and open questions}

We have given a simple, resultant-based condition for two moduli of the form
$2^n - 2^k + 1$ to have scalable modular inverses. This condition enables a fast
search for sets of trinomial moduli which are suitable for the Chinese Remainder
Theorem.

There are a number of topics which deserve further study.

\begin{enumerate}
    \item What is the true rate of growth of the clique number $\omega(T(n))$? We suspect that it is asymptotically smaller than $\log n$. A related question is to determine the edge density of $T(n)$, which we suspect is asymptotically smaller than $n^2$.
    \item Is there a formula for the resultant of two trinomials, similar to the
    resultant of two binomials or the resultant of two cyclotomics
    \cite{apostol}? A closely related project is to study sequences of the form
    $c(n) = \res(f(x), x^n - 1)$ for polynomials $f \in \mathbb{Z}[x]$.
    \item We only use ``$2$'' as a base for our moduli because we work with
    binary computers. Many of our results carry over verbatim if $2$ is replaced
    with an arbitrary prime $p$. Studying ``$p$-adically resolving'' trinomials,
    the $p$-adic valuations of these resultants, and so on, seems interesting in
    its own right.
\end{enumerate}

\paragraph{Acknowledgements} We thank the Digital Research Alliance of Canada
and the Office of Research Computing and Data Services at Dartmouth College for
computational resources to run experiments.

\printbibliography

\end{document}